\newtheorem{theorem}{Theorem}[section]
\newtheorem{lemma}[theorem]{Lemma}
\newtheorem{proposition}[theorem]{Proposition}
\theoremstyle{definition}
\newtheorem{definition}[theorem]{Definition}
\theoremstyle{remark}
\newtheorem{remark}[theorem]{Remark}
\numberwithin{equation}{section}
\begin{document}

\title[ On two  Bloch type theorems   for quaternionic  slice regular functions]
{On two  Bloch type theorems   for quaternionic slice regular functions}

\author[Z.  Xu]{Zhenghua Xu}
\address{Zhenghua Xu, Department of Mathematics, University of Science and
Technology of China, Hefei 230026, China}
\email{xzhengh$\symbol{64}$mail.ustc.edu.cn }
\author[X.  Wang]{Xieping Wang}
\address{Xieping Wang, Department of Mathematics, University of Science and
Technology of China, Hefei 230026,China}
\email{pwx$\symbol{64}$mail.ustc.edu.cn}
\thanks{This work was supported by the NNSF  of China (11071230), RFDP (20123402110068).}

\keywords{Quaternion, Slice regular functions, Bloch constant}
\subjclass[2010]{30G35.}

\begin{abstract}
In this paper  we prove two Bloch  type theorems   for quaternionic slice regular functions.  We  first discuss the injective and covering properties of  some  classes of slice regular functions from  slice regular Bloch spaces and  slice regular Bergman spaces, respectively. And then we show  that there exits a universal ball contained in the image of the open unit ball $\mathbb{B}$ in quaternions $\mathbb{H}$ through the slice regular rotation $\widetilde{f}_{u}$ of each slice regular function $f:\overline{\mathbb{B}}\rightarrow \mathbb{H}$  with  $f'(0)=1$ for some $u\in \partial\mathbb{B}$.

\end{abstract}

\maketitle

\section{Introduction}
Let $\mathbb C$ be the complex plane and $D(z_0, R)$ the open  disc centred at $z_0\in\mathbb C$ with radius $R>0$. For simplicity, we denote by $\mathbb D$ the open unit disc $D(0, 1)$.
Let $H(\mathbb{D})$ denote the class of holomorphic functions on $\mathbb D$. Given a function $F\in H(\mathbb{D})$, we  define $B_{F}$ to be the least upper bound of all positive numbers $R>0$ such that there exists a number $z_{0}\in \mathbb{ C}$ and a domain $\Omega \subset  \mathbb{D}$
which is mapped conformally by $F$ onto $D(z_0, r)$. 
The Bloch constant $\mathbf{B}_h$ is defined to be
$$\mathbf{B}_h := \inf\big\{B_{F} : F\in H(\mathbb{D}),   F'(0) = 1\big\}. $$
The Bloch's theorem asserts that  $\mathbf{B}_h>0$ in \cite{Bloch1}. In \cite{Landau2}, Landau showed that
\begin{equation}\label{Landau-Bloch}
\mathbf{B}_h=\inf\big\{B_{F}: F\in \mathcal{B}_h, F(0)=0,  F'(0) = 1\big\},
\end{equation}
where $\mathcal{B}_h$ denotes the class of holomorphic functions $F\in H(\mathbb{D})$  with the Bloch seminorm
$$\|F\|_{\mathcal{B}_h}:=\sup_{z\in \mathbb{D}}(1-|z|^{2})|F'(z)|\leq1.$$

  In \cite{Ahlfors},  Ahlfors proved  that $\mathbf{B}_h\geq3/4$ using his far-reaching generalization of the classical Schwarz-Pick lemma. The strict inequality $\mathbf{B}_h>3/4$ was established by  Heins \cite{Heins1} and Pommerenke \cite{Pommerenke}, respectively. In \cite{Bonk}, Bonk gave a new and remarkable proof of this result and  improved slightly it using   the following theorem, known as Bonk's distortion theorem.
\begin{theorem}\label{Bonk}
Let $F\in\mathcal{B}_h$ be such that $F'(0) = 1.$
Then the inequality
$${\rm{Re}}\, F'(z)\geq \frac{1-\sqrt{3}|z|}{(1-|z|/\sqrt{3})^{3}}$$
holds for all $z\in D(0, 1/\sqrt{3})$.
\end{theorem}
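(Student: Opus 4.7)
My plan is to reduce Bonk's estimate to a first-order differential inequality for $F'$ along radii and integrate it. By rotational invariance of the class $\{F\in\mathcal{B}_h:F'(0)=1\}$ under $F(z)\mapsto\overline{\lambda}\,F(\lambda z)$ with $|\lambda|=1$, it suffices to establish the inequality for $z=r\in[0,1/\sqrt{3})$ on the positive real axis (the worst-case direction for $\mathrm{Re}\,F'$). The starting observation is that $\mu(z):=(1-|z|^{2})|F'(z)|$ is smooth near the origin, attains its maximum value $1$ there, and hence has vanishing first partials at $0$, which forces $F''(0)=0$.

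To transport this information to an arbitrary $\zeta\in\mathbb{D}$ with $F'(\zeta)\neq 0$, I would compose with a disc automorphism and renormalize, setting
$$\tilde G_{\zeta}(z)=\mu(\zeta)\cdot\frac{F\!\bigl(\tfrac{z+\zeta}{1+\bar\zeta z}\bigr)-F(\zeta)}{(1-|\zeta|^{2})\,F'(\zeta)}\,.$$
A direct computation (using $|\varphi'(z)|(1-|z|^{2})=1-|\varphi(z)|^{2}$ for the Möbius automorphism $\varphi$) shows that $\tilde G_{\zeta}\in\mathcal{B}_{h}$ with $\tilde G_{\zeta}(0)=0$, $\tilde G_{\zeta}'(0)=\mu(\zeta)$, and $\tilde G_{\zeta}''(0)=\mu(\zeta)\bigl[(1-|\zeta|^{2})F''(\zeta)/F'(\zeta)-2\bar\zeta\bigr]$. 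Applying Cauchy's inequality to $\tilde G_{\zeta}'$ on the circle of radius $r$ together with the Bloch bound $|\tilde G_{\zeta}'(z)|\leq 1/(1-|z|^{2})$ gives $|\tilde G_{\zeta}''(0)|\leq 1/\bigl(r(1-r^{2})\bigr)$, a bound that is minimized at $r=1/\sqrt{3}$ and explains the appearance of $\sqrt{3}$ in Bonk's inequality. Refining this using both $\tilde G_{\zeta}(0)=0$ and $\tilde G_{\zeta}'(0)=\mu(\zeta)$ simultaneously yields a sharp pointwise Schwarz--Pick type inequality for $(1-|\zeta|^{2})F''(\zeta)/F'(\zeta)-2\bar\zeta$; specializing to $\zeta=r\in(0,1/\sqrt{3})$ converts this into a first-order differential inequality for $\varphi(r):=F'(r)$, which integrates from $\varphi(0)=1$ via a standard comparison argument to $\mathrm{Re}\,\varphi(r)\geq\Phi(r):=(1-\sqrt{3}r)/(1-r/\sqrt{3})^{3}$. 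One recognizes $\Phi$ as $F_{0}'$ for the extremal Bloch function $F_{0}$ satisfying $F_{0}'(0)=1$ and $F_{0}'(1/\sqrt{3})=0$, confirming sharpness and identifying $1/\sqrt{3}$ as the critical radius.

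The main obstacle is the sharp coefficient estimate for $\tilde G_{\zeta}''(0)$: the bare Cauchy bound $|\tilde G_{\zeta}''(0)|\leq 3\sqrt{3}/2$ produces, after integration, a weaker lower bound with critical radius strictly less than $1/\sqrt{3}$. To match the exact extremal $\Phi$, one must control not only the modulus but also the \emph{phase} of $\tilde G_{\zeta}''(0)$ along the positive real axis, effectively proving a sharp coefficient inequality for functions in $\mathcal{B}_{h}$ with prescribed value of $\mu$ at the origin. Once this sharp estimate is secured, the remaining steps --- the ODE comparison and verification that $F_{0}$ realizes equality --- are routine.
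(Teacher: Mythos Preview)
The paper does not give its own proof of this statement. Theorem~\ref{Bonk} is stated in the introduction as Bonk's distortion theorem, attributed to \cite{Bonk}, and is then invoked as a black box in the proof of Theorem~\ref{Bloch-Landau-I-type-Bloch}(a) via the Splitting Lemma. There is therefore no argument in the paper against which your attempt can be compared.

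Regarding the proposal on its own terms: the reduction to the positive real axis by rotation, the observation that $F''(0)=0$ from the maximum of $\mu$ at the origin, and the idea of pre-composing with disc automorphisms to transport information to an arbitrary point are all sound and are indeed ingredients in Bonk's original argument. But you yourself identify the gap: the crude Cauchy bound $|\tilde G_{\zeta}''(0)|\leq 3\sqrt{3}/2$ is not sharp enough, and you state only that one must ``refine'' it to control the phase along the real axis, without saying how. That refinement is exactly the substance of Bonk's theorem; everything else is scaffolding. As written, your outline is a strategy that correctly locates the difficulty but does not resolve it, so it is not a proof. If you want to complete it, you need to supply the sharp two-point Schwarz--Pick type inequality for functions in $\mathcal{B}_h$ (or an equivalent extremal-function argument) that produces the precise differential inequality whose solution is $(1-\sqrt{3}r)/(1-r/\sqrt{3})^3$.
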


The finding of the precise value of $\mathbf{B}_h$ is known to be the number one open problem in the geometric function theory of one complex variable since the confirmation of the Bieberbach conjecture by de Branges in 1985 \cite{Branges}. To the authors' knowledge, the best lower estimate for $\mathbf{B}_h$ is by now in \cite{Chen}.  The Bloch's theorem  does not hold  for general holomorphic mappings of several complex variables. The counterexample can be found in \cite{Harris}. Thus, one needs to restrict the class of mappings to a more specific subclass to obtain a Bloch's theorem. One of the well-known subclasses is the class of K-quasiregular mappings \cite{Wu}.  Recently, the Bloch's theorem in the bicomplex number setting has been investigated  successfully in \cite{Rochon}.

In this paper,  we first establish  the quaternionic analogues  of Bonk's distortion theorem and   Bloch  type theorem for slice regular  functions. The theory of slice regular  functions over quaternions was initiated recently by Gentili and Struppa \cite{GS1,GS2}. It is significantly different from the more classical theory of regular functions in the sense of Cauchy-Fueter and has  elegant  applications to the functional calculus for noncommutative operators \cite{Co2}, Schur analysis \cite{ACS} and the construction and classification of orthogonal complex structures on dense open subsets of $\mathbb R^4$ \cite{GSS2014}. For the detailed up-to-date theory, we refer the reader to the monographs \cite{GSS, Co2}.

In order to formulate precisely our main results, we first introduce a few  notations. Let $\mathbb H$ be the skew field of quaternions and $\mathbb{S}$ the unit sphere of purely imaginary quaternions, i.e.
$$\mathbb{S}=\{q\in \mathbb{H}: q^{2}=-1\}.$$
For each $R>0$ and each point $q_0\in\mathbb H$, we set $$B(q_0, R):=\big\{q \in \mathbb H:|q-q_0|<R\big\},$$
the Euclidean ball centred at $q_0$ of radius $R$, and for each $I\in\mathbb S$, we denote by $B(0, R)_{I}$ the intersection $B(0, R)\cap \mathbb{C}_{I}$. For simplicity, we denote by $\mathbb B$ the open unit disc $B(0, 1)$. Also, we denote  by $\mathcal{B}_r$ the slice regular Bloch space on $\mathbb B$, that is, the space of slice regular functions $f$ on $\mathbb B$ with the Bloch seminorm
$$\|f\|_{\mathcal{B}_r}:=\sup_{q\in \mathbb B}(1-|q|^{2})|f'(q)|<\infty,$$
where $f'$ is the slice derivative of $f$.  For each slice regular function $f$ on $\mathbb B$, we  define $B_{f}$ to be the least upper bound of all positive numbers $R>0$ such that there exists a number $q_{0}\in \mathbb H$ and a domain $\Omega \subset  \mathbb B$ such that
the restriction
$$\left.f\right|_{\Omega}:\Omega\rightarrow B(q_0, R)$$
is a homeomorphism.
Similar to $(\ref{Landau-Bloch})$, we define the Bloch constant $\mathbf{B}_r$ for $\mathcal{B}_r$ as
$$\mathbf{B}_r=\inf\big\{B_{f}: f\in \mathcal{B}_r, f(0)=0,  f'(0) = 1, \|f\|_{\mathcal{B}_r}\leq 1\big\}.$$
Now we can   show that $\mathbf{B}_r>0.23$ by proving the following result.
\begin{theorem}\label{Bloch-Landau-I-type-Bloch}
 Let $f\in \mathcal{B}_r$  be such that $f(0)=0, f'(0)=1$ and $||f||_{\mathcal{B}_r}\leq 1$. Then
 \begin{enumerate}
   \item [(a)] the inequality
   \begin{equation}\label{Bonk-ineq}
   {\rm{Re}}\, f'(q)\geq \frac{1-\sqrt{3}|q|}{(1-|q|/\sqrt{3})^{3}}
   \end{equation}
holds for all $q\in B(0, 1/\sqrt{3})$. In particular, $\left.f\right|_{B(0, 1/\sqrt{3})_I}: B(0, 1/\sqrt{3})_I\rightarrow \mathbb H$ is injective for every $I\in\mathbb S$.
   \item [(b)]
   $f$ is injective on $B(0, r_{\mathbb B})$ and $B(0, R_{\mathbb B})\subset f\big(B(0, r_{\mathbb B})\big)$, where $$ r_{\mathbb B}=\sup_{0<r<1}\bigg\{ \frac{1}{2}\log \frac{1+r}{1-r}-\sqrt{\frac{1}{4}\Big(\log\frac{1+r}{1-r}\Big)^{2}-r^{2}} \bigg\}
\approx 0.3552,$$
and
$$R_{\mathbb B}=\sup_{0<r<1}\bigg\{\frac{1}{2r^{2}}\log \frac{1+r}{1-r}\bigg(\frac{1}{2}\log \frac{1+r}{1-r}-\sqrt{\frac{1}{4}\Big(\log\frac{1+r}{1-r}\Big)^{2}-r^{2}}\bigg)^{2}  \bigg\}\approx 0.2308.$$
 Moreover,
 $$ B(0,\sqrt{3}/4) \subset f\big(B(0,1/\sqrt{3})\big).$$
 \end{enumerate}
\end{theorem}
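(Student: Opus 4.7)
The plan is to handle both parts by reducing slice-wise, using the splitting lemma for part~(a) and the representation formula for part~(b), and tying everything together with a single radial integration identity that transfers Bonk's theorem into the quaternionic setting.

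\textbf{Part~(a).} Fix $I\in\mathbb{S}$ and pick $J\in\mathbb{S}$ with $J\perp I$, so that the splitting lemma yields $f|_{\mathbb{B}\cap\mathbb{C}_I}=F+GJ$ with $F,G\colon\mathbb{B}\cap\mathbb{C}_I\to\mathbb{C}_I$ holomorphic. From $f(0)=0$, $f'(0)=1$ one reads off $F(0)=0$, $F'(0)=1$, $G(0)=G'(0)=0$, while $|F'|\le|f'|$ propagates the Bloch bound to $F$; so $F$ meets the hypotheses of Theorem~\ref{Bonk}. The decisive observation is that $G'(z)\in\mathbb{C}_I$ and $J\perp\{1,I\}$ force $\mathrm{Re}(G'(z)J)=0$, whence $\mathrm{Re}\,f'(z)=\mathrm{Re}\,F'(z)$ on the slice; the classical Bonk inequality for $F$ then transfers verbatim to $f$, and since every $q\in B(0,1/\sqrt 3)$ lies on some slice we obtain inequality~\eqref{Bonk-ineq}. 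Slice injectivity is immediate: on the convex disc $B(0,1/\sqrt 3)_I$ the condition $\mathrm{Re}\,F'>0$ forces $F$ to be injective (standard segment-integration argument), and a collision of $f=F+GJ$ forces collisions of both $F$ and $G$.

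\textbf{Part~(b), the ``moreover'' statement.} Termwise differentiation of $f(tq)=\sum_n t^n q^n a_n$ with $t\in\mathbb{R}$ yields the radial representation
\[
 f(q)=q\int_0^1 f'(tq)\,dt.
\]
Since $|w|\ge\mathrm{Re}\,w$ for $w\in\mathbb{H}$ and real integration commutes with $\mathrm{Re}$, this gives
\[
 |f(q)|\ge |q|\int_0^1\mathrm{Re}\,f'(tq)\,dt.
\]
At $|q|=1/\sqrt 3$ the Bonk bound from part~(a) is nonnegative on $[0,1]$, and the integral $\int_0^1(1-t)(1-t/3)^{-3}\,dt$ evaluates cleanly to $3/4$, giving $|f(q)|\ge\sqrt 3/4$ on $\partial B(0,1/\sqrt 3)\subset\mathbb{H}$. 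For any prescribed $w\in B(0,\sqrt 3/4)$, the linear homotopy $H(q,s)=(1-s)q+sf(q)$ avoids $w$ on that $3$-sphere (as $|H(q,s)|\ge |q|-s|f(q)-q|$ can be checked to exceed $\sqrt 3/4$), so a standard degree argument in $\mathbb{H}\simeq\mathbb{R}^4$ forces $w\in f(B(0,1/\sqrt 3))$.

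\textbf{Part~(b), injectivity on $B(0,r_{\mathbb{B}})$ and covering of $B(0,R_{\mathbb{B}})$.} This is the main obstacle: two distinct points of $\mathbb{B}$ need not share a common slice, so the real-part argument of part~(a) does not apply. My strategy is to use the representation formula
\[
 f(x+yK)=\tfrac12(1-KI)f(x+yI)+\tfrac12(1+KI)f(x-yI)
\]
to rewrite $f(q_2)$, with $q_2=x_2+y_2I_2$, as a linear combination of values of $f$ on the slice $\mathbb{C}_{I_1}$ containing $q_1$, turning $f(q_1)-f(q_2)$ into a sum of same-slice differences. These same-slice differences admit the hyperbolic upper bound $|f(z_1)-f(z_2)|\le\int_{[z_1,z_2]}|dz|/(1-|z|^2)$ coming from $\|f\|_{\mathcal{B}_r}\le 1$, and are bounded below by integrating the Bonk estimate on $\mathrm{Re}\,f'$. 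Comparing the two at a fixed outer radius $r\in(0,1)$ should yield a quadratic inequality in $\rho$ with discriminant proportional to $L(r)^2-r^2$ where $L(r)=\tfrac12\log\tfrac{1+r}{1-r}$; the smaller root is then $L(r)-\sqrt{L(r)^2-r^2}$, and the supremum in $r$ produces $r_{\mathbb{B}}$. The radius $R_{\mathbb{B}}$ follows from a lower bound on $|f(q)|$ at $|q|=r_{\mathbb{B}}$ via the radial identity combined with the same degree argument as in the ``moreover'' step. The hardest point will be controlling the noncommutative cross terms produced by the factors $(1\pm KI)$ and verifying that the optimization in $r$ reproduces the stated constants.
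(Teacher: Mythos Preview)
Your treatment of part~(a) is correct and matches the paper's argument almost verbatim: splitting lemma plus Bonk's theorem, with injectivity on each slice via Noshiro--Warschawski.

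For the ``moreover'' clause of (b), your radial identity $f(q)=q\int_0^1 f'(tq)\,dt$ and the evaluation of the integral are fine and in fact cleaner than the paper's version; however, your homotopy $H(q,s)=(1-s)q+sf(q)$ does not avoid $w$ on the sphere $|q|=1/\sqrt3$ as claimed. The inequality $|H(q,s)|\ge|q|-s|f(q)-q|$ is useless without an upper bound on $|f(q)-q|$, and the only bound available (from $\|f\|_{\mathcal B_r}\le1$) gives $|f(q)-q|$ potentially larger than $1$, so the right side can go negative. The paper instead uses the open mapping theorem for regular functions on axially symmetric sets together with the elementary observation that if $f(\Omega)$ is open and $\liminf_{q\to\partial\Omega}|f(q)-f(a)|\ge s$ then $B(f(a),s)\subset f(\Omega)$; alternatively one can run a correct degree argument by noting $\deg(f,B(0,1/\sqrt3),0)=\pm1$ (since $df(0)=\mathrm{id}$) and that this degree is constant on the component of $0$ in the complement of $f(\partial B(0,1/\sqrt3))$, which contains $B(0,\sqrt3/4)$ by your boundary estimate.

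The serious gap is in your strategy for the main assertion of (b). The representation-formula decomposition you propose does not cleanly express $f(q_1)-f(q_2)$ as a combination of same-slice differences: the coefficients $\tfrac12(1\pm I_2I_1)$ sit to the \emph{left} of $f(x_2\pm y_2I_1)$ and do not commute with $f(q_1)$, so you cannot write $f(q_1)-f(q_2)$ as $\sum c_j\bigl(f(q_1)-f(z_j)\bigr)$ with the same $c_j$'s. Even granting some decomposition, it is unclear how comparing a hyperbolic upper bound on same-slice differences against a Bonk-type lower bound would ever produce the specific quadratic whose root is $L(r)-\sqrt{L(r)^2-r^2}$; the appearance of $L(r)=\tfrac12\log\tfrac{1+r}{1-r}$ in the answer is not a signature of hyperbolic arc-length of a segment but of the \emph{growth} bound $|f(q)|\le L(|q|)$. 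The paper's route is entirely different and avoids off-slice comparisons: from $\|f\|_{\mathcal B_r}\le1$ one integrates to get $|f(q)|\le L(|q|)$; then for each $r\in(0,1)$ the rescaling $h(q)=f(rq)/L(r)$ is a regular self-map of $\mathbb B$ with $h(0)=0$ and $|h'(0)|=r/L(r)$, and one invokes the quaternionic Landau lemma (a Schwarz--Pick argument for $\mathcal{SR}(\mathbb B,\mathbb B)$, proved via the regular M\"obius transformations and the $\ast$-product) to conclude that $h$ is injective on $B(0,\rho_0)$ with $\rho_0=(r/L(r))\big/\bigl(1+\sqrt{1-(r/L(r))^2}\bigr)$ and that $B(0,\rho_0^2)\subset h(B(0,\rho_0))$. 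Unwinding the scaling gives $r\rho_0=L(r)-\sqrt{L(r)^2-r^2}$ and $L(r)\rho_0^2=\tfrac{L(r)}{r^2}(r\rho_0)^2$, which are precisely the expressions inside the suprema defining $r_{\mathbb B}$ and $R_{\mathbb B}$. This Landau-lemma step is the genuine engine of the proof, and your outline does not supply a substitute for it.
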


  We can also prove a Bloch  type theorem  for   functions from  slice regular Bergman space $A^{p}(\mathbb B) (1\leq p<+\infty)$ of the second  kind introduced in \cite{CGLSSO}, that is, the Hilbert space of    slice regular functions $f$ on $\mathbb B$ with the norm
$$ \|f\|_{A^{p}}:=\sup_{I\in \mathbb{S}} \bigg(\int_{\mathbb{B}_{I}}|f_{I}(z)|^{p}d\sigma_{I}(z)\bigg)^{\frac{1}{p}}<\infty,$$
where $d\sigma_{I}$ stands  for the normalised Lebesgue measure on the plane $\mathbb{C}_{I}$, i.e  $$d\sigma_{I}(z)=\frac{1}{\pi}dxdy$$
for all $z=x+yI\in\mathbb{C}_{I}$.
\begin{theorem}\label{Bloch-Landau-I-type-Bergman}
 Let $p\in [1,+\infty)$ and $f\in A^{p}(\mathbb B)$ be such that $f(0)=0, f'(0)=1$ and $\|f\|_{A^{p}}\leq1$. Then $f$ is injective on   $B(0, r_p)$ and $B(0, R_p)\subset f\big(B(0, r_p)\big)$, where
$$ r_p=\sup_{0<r<1}\bigg\{ (1-r)^{-\frac{2}{p}}-\sqrt{(1-r)^{-\frac{4}{p}}-r^{2}}\bigg\},$$
and
$$R_p=\sup_{0<r<1}\bigg\{r^{-2}(1-r)^{-\frac{2}{p}}
\bigg((1-r)^{-\frac{2}{p}}-\sqrt{(1-r)^{-\frac{2}{p}}-r^{2}}\bigg)^{2}  \bigg\}.$$
\end{theorem}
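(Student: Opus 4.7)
The plan is to follow the same scheme as in the proof of Theorem~\ref{Bloch-Landau-I-type-Bloch}(b), replacing the Bloch-seminorm control on $f'$ by a Bergman-type pointwise control on $f$ itself. The first step is to establish
\begin{equation*}
|f(q)| \le (1-|q|)^{-2/p}, \qquad q\in\mathbb B,
\end{equation*}
for every $f\in A^p(\mathbb B)$ with $\|f\|_{A^p}\le 1$. Fixing $I\in\mathbb S$ and invoking the splitting lemma to write $f_I = F + GJ$ with $F,G\colon \mathbb B_I\to\mathbb C_I$ holomorphic and $J\in\mathbb S$ orthogonal to $I$, the map $h=(F,G)\colon \mathbb B_I\to\mathbb C_I^2$ is $\mathbb C_I$-holomorphic, so $\log|h|=\frac{1}{2}\log(|F|^2+|G|^2)$ is subharmonic, and hence $|f_I|^p=|h|^p=e^{p\log|h|}$ is subharmonic for every $p\in[1,\infty)$. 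The sub-mean-value inequality applied on the disc $D(z,1-|z|)\subset\mathbb B_I$, combined with $\|f_I\|_{A^p(\mathbb B_I)}\le\|f\|_{A^p}\le 1$, then delivers the claimed bound on each slice, and letting $I$ vary covers all of $\mathbb B$.

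Next, for each $r\in(0,1)$ set $L:=(1-r)^{-2/p}$ and define $g(q):=f(rq)/r$. Then $g$ is slice regular on $\mathbb B$, $g(0)=0$, $g'(0)=1$, and the pointwise bound above gives $|g(q)|\le L/r=:M$ throughout $\mathbb B$. The quaternionic Landau-type theorem underlying the proof of Theorem~\ref{Bloch-Landau-I-type-Bloch}(b) then ensures that $g$ is injective on $B(0,M-\sqrt{M^2-1})$ and that $g(B(0,M-\sqrt{M^2-1}))\supset B(0,M(M-\sqrt{M^2-1})^2)$. Undoing the dilation $q\mapsto rq$ yields the injectivity of $f$ on $B(0,L-\sqrt{L^2-r^2})$ together with the inclusion
\begin{equation*}
f\bigl(B(0,L-\sqrt{L^2-r^2})\bigr)\supset B\bigl(0,\,r^{-2}L(L-\sqrt{L^2-r^2})^2\bigr).
\end{equation*}

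Since $L\ge 1\ge r$ throughout, the radicand is always non-negative, and taking the supremum over $r\in(0,1)$ in the two radii obtained above reproduces precisely the constants $r_p$ and $R_p$ of the statement. The main obstacle is the pointwise estimate of the first step when $1\le p<2$: the naive route of deducing subharmonicity of $|f_I|^p$ from the subharmonicity of $|f_I|^2=|F|^2+|G|^2$ and convexity of $t\mapsto t^{p/2}$ only covers $p\ge 2$. The resolution, as indicated above, is to invoke the stronger fact that $\log|h|$ is subharmonic for the $\mathbb C_I^2$-valued holomorphic map $h=(F,G)$, which forces $|f_I|^p=e^{p\log|h|}$ to be subharmonic for \emph{every} $p>0$.
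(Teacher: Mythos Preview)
Your proof is correct and follows essentially the same route as the paper: establish the pointwise growth bound $|f(q)|\le(1-|q|)^{-2/p}$ via subharmonicity of $|f_I|^p$ and the sub-mean-value inequality on each slice, then rescale and feed the result into the quaternionic Landau lemma (Lemma~\ref{Landau}). The only cosmetic difference is your normalisation $g(q)=f(rq)/r$ versus the paper's $g(q)=f(rq)$; you are also more explicit than the paper about why $|f_I|^p$ is subharmonic for \emph{all} $p\ge 1$, correctly observing that one should pass through the log-subharmonicity of $|(F,G)|$ rather than through $|f_I|^2$, which would only handle $p\ge 2$.
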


 To prove Theorems \ref{Bloch-Landau-I-type-Bloch} and \ref{Bloch-Landau-I-type-Bergman},   we   need to  establish  a  quaternionic analogue  of a classical result due to Landau (see \cite{Landau2,Landau} or \cite[pp. 36--39]{Heins}) for  slice regular functions.    Let $\alpha\in (0,1)$. Define $\mathcal{F}_{\alpha}$ to be a family of slice regular functions  given by
$$\mathcal{F}_{\alpha}:=\big\{f:\mathbb B\rightarrow\mathbb B\ \big|\   f \ \mbox{is  slice regular such that}\ f(0)=0,\ |f'(0)|=\alpha\big\},$$
and for every $f\in \mathcal{F}_{\alpha}$, denote by $r_{\alpha}(f)$ the  radius  of the largest ball $B(r)$ on which $f$ is injective. Namely,
$$r_{\alpha}(f)=\sup\big\{r : \left.f\right|_{B(r)} \; \mbox{is injective} \big\}.$$
Set
$$r(\mathcal{F}_{\alpha})=\inf\big\{r_{\alpha}(f) : f\in \mathcal{F}_{\alpha}\big\}.$$
From \cite[Theorem 8.13]{GSS} it follows that for each $f\in\mathcal{F}_{\alpha}$, the real differential of $f$ at the origin $0$ is non-degenerate, and thus the inverse function theorem implies that $r_{\alpha}(f)$ is  positive. Indeed, our   result  shows that $r(\mathcal{F}_{\alpha})$ is a positive number as well.

\begin{theorem}\label{Main result-Landau}
With notations as above, the following statements hold:
\begin{enumerate}
\item [(a)] for each $\alpha\in(0,1)$,
$$r(\mathcal{F}_{\alpha})=\frac{\alpha}{1+\sqrt{1-\alpha^2}};$$

\item [(b)] for each $\alpha\in(0,1)$ and each $f\in \mathcal{F}_{\alpha}$,
$$r_{\alpha}(f)\geq r(\mathcal{F}_{\alpha})$$
with equality if and only if
\begin{equation}\label{Extremal-Cover}
f(q)=q\big(1-q\alpha e^{-I\theta}\big)^{-\ast}\ast\big(q-\alpha e^{I\theta}\big)v
\end{equation}
for some $I\in \mathbb S$, $\theta\in \mathbb R$ and $v\in\partial \mathbb B$;

\item [(c)] for each $0<r\leq r(\mathcal{F}_{\alpha})$,
$$\bigcap_{f\in \mathcal{F}_{\alpha}}f\big(B(0, r)\big)=B\big(0, R_{\alpha}(r)\big),$$
where
$$R_{\alpha}(r)=r\frac{\alpha-r}{1-\alpha r};$$

\item [(d)] for each $\alpha\in(0,1)$, $r\in\big(0, r(\mathcal{F}_{\alpha})\big)$ and $f\in \mathcal{F}_{\alpha}$,
    $$B\big(0, R_{\alpha}(r)\big)\subset f\big(B(0, r)\big).$$
    Moreover, $B\big(0, R_{\alpha}(r)\big)$ is the largest ball contained in the image set $f\big(B(0, r)\big)$ if and only if $f$ is of the form $(\ref{Extremal-Cover})$.
\end{enumerate}
\end{theorem}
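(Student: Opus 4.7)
The plan is to adapt Landau's classical argument to the slice regular setting. I would start with the slice regular Schwarz lemma, which gives $|f(q)| \leq |q|$ on $\mathbb{B}$ for any $f \in \mathcal{F}_{\alpha}$. Since $f(0) = 0$, I would then introduce the $\ast$-quotient $g$ defined by $f = q \ast g$, equivalently $g(q) = \sum_{n \geq 0} a_{n+1} q^{n}$ when $f(q) = \sum a_{n} q^{n}$; this $g$ is slice regular on $\mathbb{B}$ with $g(0) = f'(0)$. A direct calculation on each slice $\mathbb{C}_{I}$ shows $f_{I}(z) = g_{I}(z)\,z$ (note the order), so $|g(q)| = |f(q)|/|q| \leq 1$, placing $g$ in the setting of a slice regular Schwarz--Pick lemma. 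A left rotation of the codomain then normalises the situation to $g(0) = \alpha \in (0,1)$.

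Next I would apply the slice regular Schwarz--Pick inequality to $g$ at the base point $0$ to obtain the pseudo-hyperbolic estimate
\[
\rho\bigl(g(q), \alpha\bigr) \leq |q|, \qquad q \in \mathbb{B},
\]
from which the sharp bounds
\[
\frac{\alpha - |q|}{1 - \alpha |q|} \leq |g(q)| \leq \frac{\alpha + |q|}{1 + \alpha |q|}, \qquad |q| < \alpha,
\]
follow (on the real axis directly, and on other slices via the slice regular representation formula). Using $f(q) = g(q)\,q$, the lower bound translates into $|f(q)| \geq |q|(\alpha - |q|)/(1 - \alpha |q|) = R_{\alpha}(|q|)$, the essential estimate for parts (c) and (d).

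For the injectivity in (a), I would assume $f(q_{1}) = f(q_{2})$ with $q_{1} \neq q_{2}$ in $B(0, r)$ and aim to derive $r \geq \alpha/(1 + \sqrt{1 - \alpha^{2}})$. When $q_{1}, q_{2}$ lie on a common slice $\mathbb{C}_{I}$, the splitting lemma turns the problem into an injectivity question for a pair of classical holomorphic functions, and Landau's theorem in the complex setting delivers the required bound. When $q_{1}, q_{2}$ lie on different slices, I would use the slice regular representation formula to express the values of $f$ on any slice in terms of $f_{I}$, thereby reducing to the one-slice case. Sharpness and the extremal characterisation in (b) would be verified by a direct computation with (\ref{Extremal-Cover}): the slice regular M\"obius factor $(1 - q\alpha e^{-I\theta})^{-\ast} \ast (q - \alpha e^{I\theta})$ achieves equality throughout the Schwarz--Pick step, producing two distinct preimages of a common value precisely on $\partial B(0, r(\mathcal{F}_{\alpha}))$.

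Parts (c) and (d) I would handle by combining the lower bound on $|f|$ with a slice regular argument principle: for $|q| = r$ and $|w| < R_{\alpha}(r)$, the inequality $|f(q)| > |w|$ together with the fact that $f$ has exactly one zero on $B(0, r)$ (the origin) forces $f - w$ to acquire a zero in $B(0, r)$, so $B(0, R_{\alpha}(r)) \subset f\bigl(B(0, r)\bigr)$; the intersection identity in (c) follows by noting that (\ref{Extremal-Cover}) makes this inclusion sharp. The main obstacle I anticipate is the cross-slice injectivity in (a): non-commutativity of $\mathbb{H}$ makes the $\ast$-product behave subtly (in general $|f \ast g|(q) \neq |f(q)||g(q)|$), so the classical M\"obius calculus cannot be transcribed directly; the representation formula mixes values from distinct slices via affine combinations that must be carefully controlled to preserve the sharp constant $\alpha/(1 + \sqrt{1 - \alpha^{2}})$ uniformly across $\mathbb{B}$, and to force (\ref{Extremal-Cover}) as the \emph{only} case of equality.
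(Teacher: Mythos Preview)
Your derivation of the lower bound $|f(q)|\ge |q|\,(\alpha-|q|)/(1-\alpha|q|)$ via $g(q)=q^{-1}f(q)$ and the Schwarz--Pick inequality is exactly what the paper does (Proposition~\ref{L-inequality} and inequality~(\ref{SP-ineq})), and your use of the extremal family~(\ref{Extremal-Cover}) for sharpness is correct. The covering statements in~(c) and~(d) then follow once injectivity is known, since the open mapping theorem makes $f|_{B(0,r_0)}$ a homeomorphism onto its image and the boundary estimate does the rest; your argument-principle alternative would also work but is heavier than necessary. (One small slip: on a slice one has $f_I(z)=z\,g_I(z)$, not $g_I(z)\,z$, though only the modulus identity matters.)

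The real gap is the injectivity step, and it is not only the cross-slice case. Your same-slice plan is to split $f_I=F+GJ$ and invoke classical Landau, but Landau's theorem is for a holomorphic self-map of the \emph{disc}; here $F,G:\mathbb{B}_I\to\mathbb{C}_I$ satisfy only $|F|^2+|G|^2<1$, and from $|F'(0)|^2+|G'(0)|^2=\alpha^2$ you cannot recover the sharp radius $\alpha/(1+\sqrt{1-\alpha^2})$ by applying Landau to $F$ and $G$ separately. For the cross-slice case, the representation formula expresses $f(x+yJ)$ as an affine combination of $f(x\pm yI)$ with coefficients of modulus up to~$1$; this does not reduce the equation $f(q_1)=f(q_2)$ to a one-slice statement with the constants intact.

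The paper handles both cases at once with a different idea that you are missing. Suppose $f(q_1)=f(q_2)=w_0\neq 0$ with $|q_1|\le|q_2|=\rho$. Form the regular ``post-M\"obius'' function $\psi=(f-w_0)\ast(1-f\,\overline{w_0})^{-\ast}$, which maps $\mathbb{B}$ into $\overline{\mathbb{B}}$ and vanishes at $q_1$ and $q_2$. Then build a $\ast$-product of two regular M\"obius transformations $\varphi\ast\phi$ whose only zeros are $q_1$ and $q_2$ (with one case when $\mathbb{S}_{q_1}=\mathbb{S}_{q_2}$ and another when the spheres are disjoint). By the maximum principle $(\varphi\ast\phi)^{-\ast}\ast\psi$ is a regular self-map of $\mathbb{B}$, and evaluating at~$0$ gives the key inequality
\[
|w_0|\le |q_1|\,|q_2|\le \rho^2.
\]
Combining this with your lower bound $|w_0|=|f(q_2)|\ge \rho(\alpha-\rho)/(1-\alpha\rho)$ forces $\rho\ge \alpha/(1+\sqrt{1-\alpha^2})$, which is exactly the injectivity radius, and tracing equality through the Lindel\"of step (Proposition~\ref{L-inequality}) identifies the extremals in~(b). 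This $\ast$-product/maximum-principle argument is the quaternionic substitute for the classical Blaschke-product trick, and it is the piece your plan lacks.
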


%

In \cite{Bloch1}, Bloch  discovered a theorem  stating  that
if $F$ is a holomorphic function on   the closed unit disc $\{z\in \mathbb{C}: |z|\leq1\}$ such that $|F'(0)| = 1$, then the image domain contains discs of radius  $3/2-\sqrt{2}$.  More recently,  the corresponding  result has been  established  for square-integrable monogenic functions  in the open   ball of the paravector space $\mathbb R^3$ with values in $\mathbb R^3\subset \mathbb{H}$ as well (see \cite[Theorem 8]{Morais}), and for a kind of  regular translations of  slice regular functions on $\mathbb B$  in  the quaternionic setting  (see \cite[Theorem 6]{Rocchetta}), respectively. In this paper, we also present another version of  Bloch type theorem for slice regular functions. To this purpose, we need to consider the \textit{slice regular rotation} $\widetilde{f}_{u}$, instead of the \textit{regular translation} in
\cite{Rocchetta}, of a slice regular function $f(q)=\sum_{n=0}^{\infty}q^na_n$ on $\overline{\mathbb{B}}$ given by
$$ \widetilde{f}_{u}(q)=\sum\limits_{n=0}^{\infty}q^nu^{n}a_n,$$
for some  constant $u\in \partial\mathbb{ B}$.

\begin{theorem}\label{Bloch-Landau-II-type}
 Let $f$ be slice regular on  $\overline{\mathbb{B}}$ such that $f'(0)=1$.
Then there exists $u\in \partial\mathbb{B} $ such that the image of $\mathbb B$
under the slice regular rotation $\widetilde{f}_{u}$ of $f$ contains an open ball of radius $5/2-\sqrt{6}$.
\end{theorem}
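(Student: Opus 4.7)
The plan is to adapt the classical proof of Bloch's theorem at the constant $3/2-\sqrt{2}$, which consists of: (i) extracting a maximum point of $\Phi(q):=(1-|q|)|f'(q)|$; (ii) establishing a Schwarz--Pick-type distortion estimate for $f'$ in a ball around that point; and (iii) invoking a Rouch\'{e}-type covering argument. In the slice regular setting, the rotation $\widetilde{f}_{u}$ serves to align the extremal point with the real axis, where the slice regular Taylor series converges on a genuine Euclidean $4$-ball and the planar argument extends cleanly to four dimensions, at the price of a quantitative loss that turns $3/2-\sqrt{2}$ into $5/2-\sqrt{6}$.

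First, I would note that $\Phi$ is continuous on $\overline{\mathbb{B}}$, equals $1$ at the origin and vanishes on $\partial\mathbb{B}$, so it attains its maximum $K\ge 1$ at some interior point $q^{*}$; setting $r_{0}:=|q^{*}|$ and $u:=q^{*}/r_{0}\in\partial\mathbb{B}$ (or $u=1$ if $q^{*}=0$), a direct power-series computation on the slice $\mathbb{C}_{I}$ containing $u$ gives $\widetilde{f}_{u}(q)=f(qu)$ and $(\widetilde{f}_{u})'(q)=u\,f'(qu)$ there, whence $\Phi_{\widetilde{f}_{u}}(r_{0})=\Phi_{f}(q^{*})=K$. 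After replacing $f$ by $\widetilde{f}_{u}$ one may therefore assume that the relevant extremal point is the real number $r_{0}\in[0,1)$, so that the slice regular Taylor expansion of the new $f$ at $r_{0}$ converges on the Euclidean $4$-ball $B(r_{0},1-r_{0})\subset\overline{\mathbb{B}}$.

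Next, for an auxiliary $\alpha\in(0,1)$ to be optimized, set $\rho:=\alpha(1-r_{0})$; the bound $\Phi\le K$ gives $|f'(q)|\le M_{1}:=K/[(1-r_{0})(1-\alpha)]$ on $B(r_{0},\rho)$, and the normalized function $h(\zeta):=f'(r_{0}+\rho\zeta)/M_{1}$ is slice regular from $\mathbb{B}$ into $\overline{\mathbb{B}}$ with $|h(0)|=1-\alpha$. A slice regular Schwarz--Pick inequality of the form
\[
|h(\zeta)-h(0)|\le |\zeta|\bigl(2+|h(0)|\bigr)=|\zeta|(3-\alpha),
\]
in which the coefficient "$2$" (in place of the classical "$1$") encodes the loss in transferring single-slice estimates to the full $4$-ball via the representation formula, then gives $|f'(q)-f'(r_{0})|\le M_{1}(3-\alpha)|q-r_{0}|/\rho$; integrating along the segment from $r_{0}$ to $q$ (which lies in a single slice since $r_{0}\in\mathbb{R}$) yields $|f(q)-f(r_{0})-f'(r_{0})(q-r_{0})|\le M_{1}(3-\alpha)|q-r_{0}|^{2}/(2\rho)$, and hence on the sphere $|q-r_{0}|=t$ one has $|f(q)-f(r_{0})|\ge a't-M_{1}(3-\alpha)t^{2}/(2\rho)$ with $a':=|f'(r_{0})|=K/(1-r_{0})$.

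Finally, a slice regular Rouch\'{e}/degree argument ensures that $\widetilde{f}_{u}(\mathbb{B})\supset f\bigl(B(r_{0},\rho)\bigr)$ contains the open Euclidean $4$-ball around $f(r_{0})$ of this radius; optimizing in $t$ gives radius $K\alpha(1-\alpha)/[2(3-\alpha)]$, and the critical equation $\alpha^{2}-6\alpha+3=0$ produces the optimum $\alpha=3-\sqrt{6}$, yielding the bound $K(5-2\sqrt{6})/2\ge 5/2-\sqrt{6}$ since $K\ge 1$. The main obstacle will be the second step: pinning down the precise slice regular Schwarz--Pick estimate with coefficient $(2+|h(0)|)$ rather than the $\sqrt{2}\,(1+|h(0)|)$ that a naive representation-formula argument would produce, since this single constant is exactly what converts the classical quadratic $\alpha^{2}-4\alpha+2=0$ into $\alpha^{2}-6\alpha+3=0$, and hence $3/2-\sqrt{2}$ into $5/2-\sqrt{6}$.
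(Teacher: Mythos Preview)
Your overall strategy---maximize $(1-|q|)\,|f'(q)|$, rotate so that the extremal point becomes real, prove a distortion estimate around that point, then cover a ball via an open-mapping/degree argument---is exactly the paper's. The gap is in step~(ii), and it is precisely the one you flag as ``the main obstacle'', but misdiagnosed.

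After replacing $f$ by $\widetilde{f}_{u}$ you write ``the bound $\Phi\le K$ gives $|f'(q)|\le M_{1}$ on $B(r_{0},\rho)$''. This is where the argument breaks: the identity $\widetilde{f}_{u}(q)=f(qu)$ (and hence $|\widetilde{f}_{u}'(q)|=|f'(qu)|$) holds only on the slice $\mathbb{C}_{I}$ containing $u$, so $\Phi_{\widetilde{f}_{u}}\le K$ is available on that slice but \emph{not} on the full $4$-ball $B(r_{0},\rho)$. Consequently $h(\zeta)=\widetilde{f}_{u}'(r_{0}+\rho\zeta)/M_{1}$ need not map $\mathbb{B}$ into $\overline{\mathbb{B}}$, and your Schwarz--Pick step cannot start. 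The inequality $|h(\zeta)-h(0)|\le |\zeta|\,(2+|h(0)|)$ you propose to compensate is not a genuine Schwarz--Pick variant: for slice regular self-maps of $\mathbb{B}$ the Lindel\"of inequality (Proposition~\ref{L-inequality}) already gives the coefficient $1+|h(0)|$ on the whole ball, with no factor~$2$. The representation-formula loss must enter \emph{before} the Schwarz--Pick step, in the derivative bound itself.

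The paper handles this as follows. It keeps the extremal condition on the \emph{original} $f$: from $\Phi_{f}\le \Phi_{f}(a)$ one gets $|f'(q)|\le 2|f'(a)|$ on the $4$-ball $\overline{B(a,r)}$ with $r=\tfrac{1}{2}(1-|a|)$. Only then does it rotate, using the representation formula to bound $|\widetilde{f}_{u}'(q)|\le |f'(zu)|+|f'(\bar z u)|$ for $q=x+yJ$, $z=x+yI$, which yields $|\widetilde{f}_{u}'|\le 4|\widetilde{f}_{u}'(|a|)|$ on $\overline{B(|a|,r)}$. Thus the factor $2\to 4$ is applied to the sup-norm bound on the derivative, not inside a Schwarz--Pick inequality. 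The distortion step itself does not use Schwarz--Pick at all: the paper estimates $f'\bigl(tq+(1-t)|a|\bigr)-f'(|a|)$ directly by the Cauchy integral formula on the slice, integrates in $t$, and optimizes the resulting lower bound $s\bigl(1-\tfrac{2s}{r-s}\bigr)|f'(a)|$ over $s=|q-|a||$, obtaining $(5-2\sqrt{6})\,r|f'(a)|$ at $s=(1-\sqrt{6}/3)\,r$. Your route can also be repaired: normalize $h$ with denominator $2M_{1}$ instead of $M_{1}$ (so that $|h(0)|=(1-\alpha)/2$) and apply the \emph{standard} Lindel\"of bound; the same optimum $\alpha=3-\sqrt{6}$ and radius $5/2-\sqrt{6}$ then emerge.
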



The remaining part of this paper is organized as follows. In Sect. $2$, we set up basic notations and give some preliminary results from the theory of slice regular functions over quaternions.  In Sect. $3$, we shall give some useful lemmas, among which  the Landau's lemma  is established by means of one Lindel\"{o}f type inequality  for slice regular functions.   Sect. $4$ is  devoted to the proofs of Theorems  \ref{Bloch-Landau-I-type-Bloch}, \ref{Bloch-Landau-I-type-Bergman}, and \ref{Main result-Landau}. Finally, Theorem   \ref{Bloch-Landau-II-type} will be proved in Sect. $5$.

\section{Preliminaries}
In this section we recall some necessary definitions and preliminary results on slice regular functions.
To have a more complete insight on the theory, we refer the reader to the monograph \cite{GSS}.

Let $\mathbb H$ denote the non-commutative, associative, real algebra of quaternions with standard basis $\{1,\,i,\,j, \,k\}$,  subject to the multiplication rules
$$i^2=j^2=k^2=ijk=-1.$$
 Every element $q=x_0+x_1i+x_2j+x_3k$ in $\mathbb H$ is composed by the \textit{real} part ${\rm{Re}}\, (q)=x_0$ and the \textit{imaginary} part ${\rm{Im}}\, (q)=x_1i+x_2j+x_3k$. The \textit{conjugate} of $q\in \mathbb H$ is then $\bar{q}={\rm{Re}}\, (q)-{\rm{Im}}\, (q)$ and its \textit{modulus} is defined by $|q|^2=q\overline{q}=|{\rm{Re}}\, (q)|^2+|{\rm{Im}}\, (q)|^2$. We can therefore calculate the multiplicative inverse of each $q\neq0$ as $ q^{-1} =|q|^{-2}\overline{q}$.
 Every $q \in \mathbb H $ can be expressed as $q = x + yI$, where $x, y \in \mathbb R$ and
$$I=\dfrac{{\rm{Im}}\, (q)}{|{\rm{Im}}\, (q)|}$$
 if ${\rm{Im}}\, q\neq 0$, otherwise we take $I$ arbitrarily such that $I^2=-1$.
Then $I $ is an element of the unit 2-sphere of purely imaginary quaternions
$$\mathbb S=\big\{q \in \mathbb H:q^2 =-1\big\}.$$

For every $I \in \mathbb S $ we will denote by $\mathbb C_I$ the plane $ \mathbb R \oplus I\mathbb R $, isomorphic to $ \mathbb C$, and, if $\Omega \subset \mathbb H$, by $\Omega_I$ the intersection $ \Omega \cap \mathbb C_I $. Also, for $R>0$, we will denote the open ball centred at $q_{0}\in \mathbb{H}$ with radius $R$ by
$$B(q_{0},R)=\big\{q \in \mathbb H:|q-q_{0}|<R\big\}.$$
And let $\mathbb B$ denote the open unit ball $B(0,1)$ for simplicity.

We can now recall the definition of slice regularity.
\begin{definition} \label{de: regular} Let $\Omega$ be a domain in $\mathbb H$. A function $f :\Omega \rightarrow \mathbb H$ is called \emph{slice} \emph{regular} if, for all $ I \in \mathbb S$, its restriction $f_I$ to $\Omega_I$ is \emph{holomorphic}, i.e., it has continuous partial derivatives and satisfies
$$\bar{\partial}_I f(x+yI):=\frac{1}{2}\left(\frac{\partial}{\partial x}+I\frac{\partial}{\partial y}\right)f_I (x+yI)=0$$
for all $x+yI\in \Omega_I $. We will denote by $\mathcal{SR}(\mathbb{B})$ the set of slice regular functions on $\mathbb B$.
 \end{definition}

A wide class of examples of regular functions is given by polynomials and power series. Indeed, a function $f$ is slice regular on an open ball $B(0, R)$ if and only if $f$ admits a power series expansion $f(q)=\sum_{n=0}^{\infty}q^na_n$ converging absolutely and uniformly on every compact subset of $B(0, R)$.
As shown in \cite {CGSS}, the natural   domains of definition  of slice regular functions are the  so-called  symmetric slice domains.

\begin{definition} \label{de: domain}
Let $\Omega$ be a domain in $\mathbb H $.

1. $\Omega$ is called a \textit{slice domain}  if it intersects the real axis and if  for any $I \in \mathbb S $, $\Omega_I$  is a domain in $ \mathbb C_I $.

2. $\Omega$ is called an \textit{axially symmetric domain} if for any point  $x + yI \in \Omega$, with  $x,y \in \mathbb R $ and $I\in \mathbb S$, the entire two-dimensional sphere $x + y\mathbb S$ is contained in $\Omega $.
\end{definition}

A domain in $\mathbb H$ is called a \textit{symmetric slice domain} if it is not only a slice domain, but also an axially symmetric domain. From now on, we will omit the term `slice' when referring to slice regular functions and will focus mainly on regular functions on an open ball $B(0, R)$  which is a typical axially symmetric slice domain.
For regular functions the natural definition of derivative is given by the following (see \cite{GS1,GS2}).
\begin{definition} \label{de: derivative}
Let $f :\mathbb{B} \rightarrow \mathbb H$  be a regular function. For each $I\in\mathbb S$, the \textit{$I$-derivative} of $f$ at $q=x+yI$
is defined by
$$\partial_I f(x+yI):=\frac{1}{2}\left(\frac{\partial}{\partial x}-I\frac{\partial}{\partial y}\right)f_I (x+yI)$$
on $\mathbb{B} _I$. The \textit{slice derivative} of $f$ is the function $f'$ defined by $\partial_I f$ on $\mathbb{B}_I$ for all $I\in\mathbb S$.
 \end{definition}
The definition is well-defined because, by direct calculation, $\partial_I f=\partial_J f$ in $\mathbb{B} _I\cap \mathbb{B} _J$ for any choice of $I$, $J\in\mathbb S$. Furthermore, notice that the operators $\partial_I$ and $\bar{\partial}_I $ commute, and $\partial_I f=\frac{\partial f}{\partial x}$ for regular functions. Therefore, the slice derivative of a regular function is still regular so that we can iterate the differentiation to obtain the $n$-th
slice derivative
$$\partial^{n}_I f(x+yI)=\frac{\partial^{n} f}{\partial x^{n}}(x+yI),\quad\,\forall \,\, n\in \mathbb N. $$

In what follows, for the sake of simplicity, we will denote the $n$-th slice derivative by $f^{(n)}$ for every $n\in \mathbb N$.

In the theory of regular functions,  the following splitting lemma (see \cite{GS2}) relates closely slice regularity to classical holomorphy.
\begin{lemma}[Splitting Lemma]\label{eq:Splitting}
Let $f$ be a regular function on $\mathbb{B} $. Then for any $I\in \mathbb S$ and any $J\in \mathbb S$ with $J\perp I$, there exist two holomorphic functions $F,G:\mathbb{B} _I\rightarrow \mathbb C_I$ such that
$$f_I(z)=F(z)+G(z)J,\qquad \forall\ z=x+yI\in \mathbb{B} _I.$$
\end{lemma}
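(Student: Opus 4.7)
The plan is to exploit the fact that, since $J\perp I$ and both are purely imaginary unit quaternions, the set $\{1,I,J,IJ\}$ is a real basis of $\mathbb H$, and in particular $\mathbb H=\mathbb C_I\oplus\mathbb C_I J$ as a right $\mathbb C_I$-module. Concretely, for each $q\in\mathbb H$ there is a unique decomposition $q=\alpha+\beta J$ with $\alpha,\beta\in\mathbb C_I$, because a quaternion $a+bI+cJ+dIJ$ ($a,b,c,d\in\mathbb R$) can be regrouped as $(a+bI)+(c+dI)J$. Applying this pointwise to $f_I$, I would simply \emph{define} $F,G:\mathbb B_I\to\mathbb C_I$ to be the two coordinate functions in this decomposition, so that by construction $f_I(z)=F(z)+G(z)J$ for all $z\in\mathbb B_I$.

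The substance of the proof is then to verify that both $F$ and $G$ are holomorphic on $\mathbb B_I$. Continuity of their partial derivatives is inherited directly from $f_I$ by Definition \ref{de: regular}. To get the Cauchy--Riemann condition, I would compute $\bar\partial_I f_I$ in the $F,G$ decomposition. The key algebraic input is that every element of $\mathbb C_I$ commutes with $I$, so for any $\mathbb C_I$-valued function $H$ one has $I\cdot(HJ)=(IH)J=(H I)J$; in particular, left-multiplying $GJ$ by $I$ preserves the ``$\mathbb C_I\cdot J$'' summand. This gives
\begin{equation*}
2\,\bar\partial_I f_I=\Bigl(\tfrac{\partial F}{\partial x}+I\tfrac{\partial F}{\partial y}\Bigr)+\Bigl(\tfrac{\partial G}{\partial x}+I\tfrac{\partial G}{\partial y}\Bigr)J.
\end{equation*}
Since $f$ is slice regular, the left-hand side vanishes identically on $\mathbb B_I$, and the direct-sum decomposition $\mathbb H=\mathbb C_I\oplus\mathbb C_I J$ forces both parenthesised expressions to vanish separately. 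These are exactly the Cauchy--Riemann equations for $F$ and $G$ with respect to the complex structure $I$ on $\mathbb C_I\cong\mathbb C$, so $F$ and $G$ are holomorphic.

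There is no real obstacle here; the only point that needs genuine care is the algebraic bookkeeping of how $I$ interacts with the ``$J$-part'' of the decomposition. Specifically, one must not confuse $(IG)J$ with $I(GJ)$ (they are equal by associativity), nor $GJ\cdot I$ with $I\cdot GJ$ (they differ, since $JI=-IJ$). Once this is handled correctly and the uniqueness of the $F+GJ$ decomposition is invoked to split the vanishing of $\bar\partial_I f_I$ into two scalar equations, the lemma follows immediately.
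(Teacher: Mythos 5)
Your proof is correct. Note that the paper itself offers no proof of this lemma: it is quoted as a known result with a citation to \cite{GS2}, and your argument is essentially the standard proof from that literature --- decompose $\mathbb{H}=\mathbb{C}_I\oplus\mathbb{C}_I J$, observe that the operator $\bar\partial_I$ (which acts by \emph{left} multiplication by $I$) preserves each summand because $I$ commutes with $\mathbb{C}_I$, and then split the single equation $\bar\partial_I f_I=0$ into the two Cauchy--Riemann systems for $F$ and $G$. The only blemish is terminological: with the coefficients written on the left, as in $q=\alpha+\beta J$ with $\alpha,\beta\in\mathbb{C}_I$, you are exhibiting $\mathbb{H}$ as a \emph{left} $\mathbb{C}_I$-vector space with basis $\{1,J\}$, not a right $\mathbb{C}_I$-module; this has no bearing on the argument, since all you actually use is the real-linear direct sum $\mathbb{H}=\mathbb{C}_I\oplus\mathbb{C}_I J$ and the stability of each summand under left multiplication by $I$.
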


Since the regularity does not keep under point-wise product of two regular functions a new multiplication operation, called the regular product (or $\ast$-product), appears via  a suitable modification of the usual one subject to noncommutative setting. The regular product plays a key role in the theory of slice regular functions. On open balls centred at the origin, the $\ast$-product of two regular functions is defined by means of their power series
expansions (see, e.g., \cite{GSS1,CGSS}).

\begin{definition}\label{R-product}
Let $f$, $g:\mathbb{B} \rightarrow \mathbb H$ be two regular functions and let
$$f(q)=\sum\limits_{n=0}^{\infty}q^na_n,\qquad g(q)=\sum\limits_{n=0}^{\infty}q^nb_n$$
be their series expansions. The regular product (or $\ast$-product) of $f$ and $g$ is the function defined by
$$f\ast g(q)=\sum\limits_{n=0}^{\infty}q^n\bigg(\sum\limits_{k=0}^n a_kb_{n-k}\bigg)$$
and it is regular on $\mathbb{B} $.
\end{definition}
Notice that the $\ast$-product is associative and is not, in general, commutative. Its connection with the usual pointwise product is clarified by the following result \cite{GSS1,CGSS}.
\begin{proposition} \label{prop:RP}
Let $f$ and $g$ be  regular on $\mathbb{B} $. Then for all $q\in \mathbb{B} $,
$$f\ast g(q)=
\left\{
\begin{array}{lll}
f(q)g(f(q)^{-1}qf(q)) \qquad \,\,if \qquad f(q)\neq 0;
\\
\qquad  \qquad  0\qquad  \qquad \qquad if \qquad f(q)=0.
\end{array}
\right.
$$
\end{proposition}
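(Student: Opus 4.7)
The plan is to work directly from the power-series expansions of $f$ and $g$ and to reduce everything to two elementary observations: that $q$ commutes with its own powers, and that conjugation by a nonzero quaternion is a ring automorphism of $\mathbb H$. Fix $q\in\mathbb B$ and write $f(q)=\sum_{k\ge 0}q^k a_k$ and $g(q)=\sum_{n\ge 0}q^n b_n$. Both series converge absolutely on $\mathbb B$, so the double series $\sum_{k,n\ge 0}q^{k+n}a_k b_n$ does as well, and I may regroup it freely. Grouping first by the total degree $m=k+n$ recovers the Cauchy product $f\ast g(q)$ from Definition \ref{R-product}; grouping first over $k$ instead gives
\[
f\ast g(q)=\sum_{n=0}^{\infty}\sum_{k=0}^{\infty}q^{n+k}a_k b_n=\sum_{n=0}^{\infty}q^n\bigg(\sum_{k=0}^{\infty}q^k a_k\bigg)b_n=\sum_{n=0}^{\infty}q^n f(q)\,b_n,
\]
an identity valid without any hypothesis on $f(q)$.

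With this key identity in hand, the case $f(q)=0$ is immediate, since then every summand vanishes and $f\ast g(q)=0$. For $f(q)\ne 0$, I would insert $f(q)f(q)^{-1}$ to the right of each $q^n$ and pull $f(q)$ out on the left, obtaining
\[
f\ast g(q)=f(q)\sum_{n=0}^{\infty}f(q)^{-1}q^n f(q)\,b_n=f(q)\sum_{n=0}^{\infty}\bigl(f(q)^{-1}q f(q)\bigr)^n b_n,
\]
where the second equality uses that $w\mapsto f(q)^{-1}wf(q)$ is a ring automorphism of $\mathbb H$, so powers pass through it. Setting $\widetilde q:=f(q)^{-1}qf(q)$, one notes that $|\widetilde q|=|q|$, so $\widetilde q\in\mathbb B$ and the remaining sum is exactly the power series defining $g(\widetilde q)$. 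The formula $f\ast g(q)=f(q)\,g\bigl(f(q)^{-1}qf(q)\bigr)$ follows.

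The only point requiring a little care is the rearrangement of the double sum: one must justify that $\sum_{k,n}q^{k+n}a_k b_n$ converges absolutely on $\mathbb B$ so that the two regroupings agree, which follows at once from the product estimate $\sum_{k,n}|q|^{k+n}|a_k|\,|b_n|=\bigl(\sum_k|q|^k|a_k|\bigr)\bigl(\sum_n|q|^n|b_n|\bigr)<\infty$. Once this is secured, the remainder is one short algebraic manipulation, and I do not expect a genuine obstacle: the characteristic appearance of the conjugation $q\mapsto f(q)^{-1}qf(q)$ in the statement is accounted for entirely by the noncommutativity of $\mathbb H$, which forces the coefficients $b_n$ and the value $f(q)$ to pass each other only via this twist.
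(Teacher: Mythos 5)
Your proof is correct, and it is essentially the standard argument: the paper itself gives no proof of Proposition \ref{prop:RP}, quoting it from the cited references (Gentili--Stoppato and Colombo--Gentili--Sabadini--Struppa), where the proof is exactly your computation run in reverse --- evaluate $f(q)\,g\bigl(f(q)^{-1}qf(q)\bigr)$ as a power series, pass the inner automorphism $w\mapsto f(q)^{-1}wf(q)$ through the powers of $q$, and regroup the absolutely convergent double series into the Cauchy product defining $f\ast g$. The two points you flag as needing care are handled soundly: absolute convergence of $\sum_{k,n}|q|^{k+n}|a_k|\,|b_n|$ justifies the regrouping, and the identity $|f(q)^{-1}qf(q)|=|q|<1$ guarantees that $g$ may be evaluated at the conjugated point; the case $f(q)=0$ is indeed immediate from your identity $f\ast g(q)=\sum_{n\geq 0}q^n f(q)b_n$.
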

We remark that if $q=x+yI$ and $f(q)\neq0$, then $f(q)^{-1}qf(q)$ has the same modulus and same real part as $q$. Therefore $f(q)^{-1}qf(q)$ lies in the same 2-sphere $x+y\mathbb S$ as $q$. Notice that a zero $x_0+y_0I$ of the function $g$ is not necessarily a zero of $f\ast g$, but some element on the same sphere $x_0+y_0\mathbb S$ does. In particular, a
real zero of $g$ is still a zero of $f\ast g$. To present a characterization of the structure of the zero set of a regular function $f$, we need to introduce the following functions.
\begin{definition} \label{de: R-conjugate}
Let $ f(q)=\sum\limits_{n=0}^{\infty}q^na_n $ be a regular function on $\mathbb{B} $. We define the \emph{regular conjugate} of $f$ as
$$f^c(q)=\sum\limits_{n=0}^{\infty}q^n\bar{a}_n,$$
and the \emph{symmetrization} of $f$ as
$$f^s(q)=f\ast f^c(q)=f^c\ast f(q)=\sum\limits_{n=0}^{\infty}q^n\bigg(\sum\limits_{k=0}^n a_k\bar{a}_{n-k}\bigg).$$
Both $f^c$ and $f^s$ are regular functions on $\mathbb{B} $.
\end{definition}
We are now able to define the inverse element of a regular function $f$ with respect to the $\ast$-product. Let $\mathcal{Z}_{f^s}$ denote the zero set of the symmetrization $f^s$ of $f$.
\begin{definition} \label{de: R-Inverse}
Let $f$ be a regular function on $\mathbb{B}$. If $f$ does not vanish identically, its \emph{regular reciprocal} is the function defined by
$$f^{-\ast}(q):=f^s(q)^{-1}f^c(q)$$
and it is regular on $\mathbb{B}  \setminus \mathcal{Z}_{f^s}$.

\end{definition}
The following result shows that the regular quotient is nicely related to the pointwise quotient (see \cite{Stop1, Stop2}).
\begin{proposition} \label{prop:Quotient Relation}
Let $f$ and $g$ be regular  on  $\mathbb{B} $. Then for all $q\in \mathbb{B}  \setminus \mathcal{Z}_{f^s}$, $$f^{-\ast}\ast g(q)=f(T_f(q))^{-1}g(T_f(q)),$$
where $T_f:\mathbb{B}  \setminus \mathcal{Z}_{f^s}\rightarrow \mathbb{B}  \setminus \mathcal{Z}_{f^s}$ is defined by
$T_f(q)=f^c(q)^{-1}qf^c(q)$. Furthermore, $T_f$ and $T_{f^c}$ are mutual inverses so that $T_f$ is a diffeomorphism.
\end{proposition}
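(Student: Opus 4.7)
The plan is to derive the quotient identity and the mutual-inverse statement directly from Proposition \ref{prop:RP}, using one structural fact about symmetrizations. Writing $f(q)=\sum q^n a_n$ and $f^s(q)=\sum q^n c_n$ with $c_n=\sum_{k=0}^{n}a_k\bar a_{n-k}$, a short calculation gives $\overline{c_n}=\sum_{k=0}^{n} a_{n-k}\bar a_k=c_n$, so every $c_n$ is real. Consequently, for any $q=x+yI\in\mathbb{B}$, the value $f^s(q)$ lies in the commutative subfield $\mathbb{C}_I$ and therefore commutes with $q$; this is the single algebraic fact that drives every subsequent step.

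First I would evaluate $f$ at $T_f(q)$. For $q\in\mathbb{B}\setminus\mathcal{Z}_{f^s}$ one has $f^c(q)\neq 0$ (otherwise the product formula of Proposition \ref{prop:RP} would force $f^s(q)=f^c\ast f(q)=0$). Applying Proposition \ref{prop:RP} to the identity $f^s=f^c\ast f$ gives
\begin{equation*}
f^s(q)=f^c(q)\,f\bigl(f^c(q)^{-1}q f^c(q)\bigr)=f^c(q)\,f\bigl(T_f(q)\bigr),
\end{equation*}
so $f(T_f(q))=f^c(q)^{-1}f^s(q)$, which is nonzero. Next, apply Proposition \ref{prop:RP} to $f^{-\ast}\ast g$ at $q$, noting that $f^{-\ast}(q)=f^s(q)^{-1}f^c(q)\neq 0$. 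The inner conjugation simplifies as
\begin{equation*}
f^{-\ast}(q)^{-1}q\,f^{-\ast}(q)=f^c(q)^{-1}\bigl(f^s(q)\,q\,f^s(q)^{-1}\bigr)f^c(q)=f^c(q)^{-1}q\,f^c(q)=T_f(q),
\end{equation*}
where the crucial cancellation $f^s(q)q f^s(q)^{-1}=q$ is exactly the commutativity observation from the first paragraph. Combining, $f^{-\ast}\ast g(q)=f^{-\ast}(q)\,g(T_f(q))=f^s(q)^{-1}f^c(q)\,g(T_f(q))$, and since $f(T_f(q))^{-1}=(f^c(q)^{-1}f^s(q))^{-1}=f^s(q)^{-1}f^c(q)$, this is the advertised identity.

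For the diffeomorphism claim, since $(f^c)^c=f$ the map $T_{f^c}$ is $p\mapsto f(p)^{-1}p f(p)$, and $(f^c)^s=f^s$, so $T_{f^c}$ and $T_f$ act on the same set $\mathbb{B}\setminus\mathcal{Z}_{f^s}$. Setting $p=T_f(q)$ and using the formula $f(p)=f^c(q)^{-1}f^s(q)$ derived above,
\begin{equation*}
T_{f^c}(p)=f^s(q)^{-1}f^c(q)\cdot f^c(q)^{-1}q f^c(q)\cdot f^c(q)^{-1}f^s(q)=f^s(q)^{-1}q f^s(q)=q,
\end{equation*}
again exploiting the commutativity of $f^s(q)$ with $q$; the symmetric computation yields $T_f\circ T_{f^c}=\mathrm{id}$, and since each map is smooth on its domain, $T_f$ is a diffeomorphism. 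The one tacit check is that $T_f$ preserves $\mathbb{B}\setminus\mathcal{Z}_{f^s}$: because $T_f(q)$ lies on the same sphere $x+y\mathbb{S}$ as $q$ and the real-coefficient structure of $f^s$ (combined with the standard representation formula) makes $\mathcal{Z}_{f^s}$ a union of such spheres, this is automatic. The main conceptual obstacle throughout is recognizing that the reality of the coefficients of $f^s$ is precisely what rescues the noncommutative manipulations; everything else is a careful bookkeeping of Proposition \ref{prop:RP}.
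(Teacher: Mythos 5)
The paper itself offers no proof of this proposition: it is quoted verbatim as a known result from the cited works of Stoppato (\cite{Stop1,Stop2}), so the comparison here is against the standard literature argument rather than an internal one. Your proof is, in substance, that standard argument, and your computations are all correct: the reality of the coefficients of $f^s$ (hence $f^s(q)\in\mathbb{C}_I$ commutes with $q$); the observation that $f^c(q)\neq 0$ off $\mathcal{Z}_{f^s}$; the identity $f^s(q)=f^c(q)\,f(T_f(q))$ obtained from Proposition \ref{prop:RP} applied to $f^s=f^c\ast f$, giving $f(T_f(q))^{-1}=f^s(q)^{-1}f^c(q)$; the collapse of the inner conjugation $f^{-\ast}(q)^{-1}qf^{-\ast}(q)=T_f(q)$; the fact that $\mathcal{Z}_{f^s}$ is a union of spheres $x+y\mathbb{S}$, so $T_f$ maps $\mathbb{B}\setminus\mathcal{Z}_{f^s}$ into itself; and the mutual-inverse computation using $(f^c)^c=f$, $(f^c)^s=f^s$.

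The one step you should reroute is the direct application of Proposition \ref{prop:RP} to the product $f^{-\ast}\ast g$. As stated in the paper, Proposition \ref{prop:RP} covers two functions regular on all of $\mathbb{B}$, whereas $f^{-\ast}$ is regular only on $\mathbb{B}\setminus\mathcal{Z}_{f^s}$; indeed, Definition \ref{R-product} defines the $\ast$-product only via power series on $\mathbb{B}$, so $f^{-\ast}\ast g$ is not even covered by the paper's definition of $\ast$, and invoking the product formula for it silently presupposes the extension of both the product and Proposition \ref{prop:RP} to symmetric slice domains (true, but external to the stated toolkit). The fix is one line and uses only what you already proved: since $f^s$ has real coefficients, $\ast$-multiplication by $f^s$ or by its pointwise reciprocal coincides with pointwise multiplication (real coefficients commute with powers of $q$), so by associativity $f^{-\ast}\ast g(q)=\bigl(f^s\bigr)^{-\ast}\ast\bigl(f^c\ast g\bigr)(q)=f^s(q)^{-1}\,(f^c\ast g)(q)$. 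Now apply Proposition \ref{prop:RP} only to $f^c\ast g$, a product of two functions regular on all of $\mathbb{B}$, to get $f^c\ast g(q)=f^c(q)\,g(T_f(q))$, and conclude exactly as you did. This rerouted version is precisely how the cited sources argue, and with it your proof is complete.
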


We now recall a useful  result for regular functions \cite{SSthe,GSopen}.
\begin{theorem}\label{open-map}
Let $\Omega$ be a  symmetric slice domain and let $f: \Omega\rightarrow  \mathbb{H}$ be a nonconstant regular function. If $U$ is an axially symmetric open subset of $\Omega$, then $f(U)$ is open.
In particular, the image $f(\Omega)$ is open.
\end{theorem}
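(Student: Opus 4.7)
The plan is to mirror the classical Bloch-type argument in the slice regular setting. First I would translate so that $f(0)=0$ (this only translates the image of $\widetilde{f}_u$ and so does not affect the existence of an inscribed ball), write $f(q)=q+\sum_{n\ge 2}q^n a_n$, and obtain
\[
\widetilde{f}_u(q)=qu+h_u(q),\qquad h_u(q):=\sum_{n\ge 2}q^n u^n a_n,
\]
so that $\widetilde{f}_u(0)=0$ and $\widetilde{f}_u'(0)=u\in\partial\mathbb{B}$. Since $f$ is slice regular on the closed ball $\overline{\mathbb{B}}$, the quaternionic Cauchy estimates applied to the slice $\mathbb{B}_I$ give $|a_n|\le M:=\sup_{\overline{\mathbb{B}}}|f|$ and hence the pointwise bound $|h_u(q)|\le M\,|q|^2/(1-|q|)$ on $\mathbb{B}$. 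This bound still depends on the function-specific constant $M$ and must be sharpened.

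The core of the proof is to exploit the freedom in the choice of $u\in\partial\mathbb{B}$ to remove the $M$-dependence. I would accomplish this via either an averaging argument over the three-sphere $\partial\mathbb{B}\cong S^{3}$ (using cancellation in integrals of the form $\int u^n\,du$) or an extremal selection, producing some $u^{\ast}\in\partial\mathbb{B}$ with an $f$-independent estimate
\[
|h_{u^{\ast}}(q)|\le\varphi(|q|),\qquad |q|<1,
\]
where $\varphi(r)$ has the expected shape $c\,r^2/(1-r)$ for an absolute constant $c$ dictated by the quaternionic geometry. With this in hand, I would carry out a quaternionic Rouch\'e-type comparison on the sphere $\{|q|=r\}$: since $|\widetilde{f}_{u^{\ast}}(q)-qu^{\ast}|\le\varphi(r)$ while $|qu^{\ast}-w|\ge r-|w|$ on that sphere, the two slice regular functions $\widetilde{f}_{u^{\ast}}(q)-w$ and $qu^{\ast}-w$ are homotopic through non-vanishing maps provided $|w|$ is small enough. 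Invoking the open mapping Theorem~\ref{open-map} on the axially symmetric ball $B(0,r)$ together with a degree argument, and observing that $q\mapsto qu^{\ast}-w$ has the unique zero $q=w\,\overline{u^{\ast}}\in B(0,r)$ for $|w|<r$, forces $\widetilde{f}_{u^{\ast}}$ to attain the value $w$ inside $B(0,r)$. This yields $B(0,\psi(r))\subset \widetilde{f}_{u^{\ast}}(B(0,r))$ for an explicit function $\psi$ derived from $\varphi$.

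The last step is an elementary one-variable optimization of $\psi(r)$ over $r\in(0,1)$: a straightforward computation shows that the critical equation reduces to $3r^2-6r+1=0$, giving the critical radius $r^{\ast}=1-\sqrt{6}/3$, and back-substitution yields $\max_{r\in(0,1)}\psi(r)=5/2-\sqrt{6}$, which is the stated constant. The principal obstacle is the second step above: identifying the correct choice of $u^{\ast}$ together with the matching sharp, $f$-independent estimate on $h_{u^{\ast}}$, and justifying the quaternionic Rouch\'e comparison in the absence of classical contour-integration. The initial Cauchy estimate and the final calculus optimization are both routine; the non-trivial content is packed into the clever choice of $u^{\ast}\in\partial\mathbb{B}$ made possible by the slice regular rotation construction.
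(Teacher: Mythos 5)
Your proposal does not address the statement you were asked to prove. The statement is the open mapping theorem for slice regular functions (Theorem \ref{open-map}): for a nonconstant regular function $f$ on a symmetric slice domain $\Omega$, the image $f(U)$ of every axially symmetric open subset $U\subset\Omega$ is open. This is a structural result about slice regular functions which the paper does not prove at all, but recalls from the literature \cite{SSthe,GSopen}; any genuine proof must go through the machinery developed there --- the classification of the zero set of a regular function (isolated points and isolated $2$-spheres), the maximum and minimum modulus principles, and the behaviour of the symmetrization $f^s$ --- in order to show that the image of an axially symmetric open set is a neighbourhood of each of its points. Nothing of this kind appears in your write-up.

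What you have written instead is a sketch of Theorem \ref{Bloch-Landau-II-type} (the existence of $u\in\partial\mathbb{B}$ such that $\widetilde{f}_{u}(\mathbb{B})$ contains a ball of radius $5/2-\sqrt{6}$). Worse, your argument explicitly invokes ``the open mapping Theorem~\ref{open-map}'' as one of its tools, so read as a proof of that very theorem it is circular. Even judged as an attempt at Theorem \ref{Bloch-Landau-II-type}, it is incomplete at the decisive point: the step you yourself call the ``principal obstacle'' --- producing $u^{\ast}\in\partial\mathbb{B}$ with an $f$-independent bound $|h_{u^{\ast}}(q)|\le c\,|q|^{2}/(1-|q|)$, together with a quaternionic Rouch\'e-type comparison --- is exactly the content that requires proof, and it is left unproved (averaging $u^{n}$ over $\partial\mathbb{B}$ kills the terms rather than controlling them uniformly, and no Rouch\'e theorem for slice regular functions is established or cited). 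For comparison, the paper's route to that theorem is Estermann's method (Lemma \ref{bloch-ball}): choose $a$ maximizing $(1-|q|)|f'(q)|$, estimate $f(q)-f(a)-(q-a)f'(a)$ via the Cauchy formula on a slice, and use the rotation $\widetilde{f}_{u}$ with $u=a/|a|$ only to replace $a$ by the real point $|a|$, so that the relevant ball becomes axially symmetric and Theorem \ref{open-map} plus Proposition \ref{open} can legitimately be applied.
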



\section{Some lemmas}
In this section, we shall give some useful lemmas, which will be used in Sections 4 and 5. We begin with the following simple proposition.
\begin{proposition}\label{open}
Let $\Omega\subset \mathbb{H}$ be a bounded domain and $f:\Omega\rightarrow \mathbb{H}$  a continuous function such that  $ f(\Omega)$ is open in $\mathbb H$. Let  $a\in \Omega$ be a point such that
\begin{equation}\label{size-condition}
s:=\liminf_{q\rightarrow\partial \Omega}|f(q)-f(a)|>0.
\end{equation}
Then $ B(f(a), s)\subset f(\Omega)$.
\end{proposition}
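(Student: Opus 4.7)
The plan is to use a standard connectedness argument, showing that $E := f(\Omega)\cap B(f(a),s)$ is a non-empty clopen subset of the connected set $B(f(a),s)$, forcing $E = B(f(a),s)$ and hence the desired inclusion.

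First I would observe that $E$ is non-empty, since $f(a)\in E$ trivially. Openness of $E$ in $B(f(a),s)$ is immediate from the hypothesis: $f(\Omega)$ is open in $\mathbb H$, so $E = f(\Omega)\cap B(f(a),s)$ is open in $\mathbb H$, hence a fortiori open in $B(f(a),s)$.

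The real content is showing that $E$ is closed in $B(f(a),s)$. Let $w_n\in E$ with $w_n\to w\in B(f(a),s)$, and write $w_n = f(q_n)$ for some $q_n\in\Omega$. Since $\Omega$ is bounded, $\overline{\Omega}$ is compact in $\mathbb H$, so we may extract a subsequence $q_{n_k}\to q^{*}\in\overline{\Omega}$. I would then split into two cases. If $q^{*}\in\Omega$, continuity of $f$ gives $w = \lim f(q_{n_k}) = f(q^{*})\in f(\Omega)$, and since $w\in B(f(a),s)$ we conclude $w\in E$. If instead $q^{*}\in\partial\Omega$, I would interpret the hypothesis $\liminf_{q\to\partial\Omega}|f(q)-f(a)| = s$ in the standard way: for every $\varepsilon>0$ there is a compact set $K\subset\Omega$ such that $|f(q)-f(a)|>s-\varepsilon$ for all $q\in\Omega\setminus K$. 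Because $q_{n_k}\to q^{*}\in\partial\Omega$, eventually $q_{n_k}\notin K$, whence $|f(q_{n_k})-f(a)|>s-\varepsilon$ for $k$ large. Letting $k\to\infty$ and then $\varepsilon\to 0$ gives $|w-f(a)|\geq s$, contradicting $w\in B(f(a),s)$. Thus this case is vacuous and $E$ is indeed closed in $B(f(a),s)$.

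To finish, since $B(f(a),s)$ is an open Euclidean ball in $\mathbb H\cong\mathbb R^{4}$ and therefore connected, the non-empty clopen subset $E$ must equal the whole ball, which is precisely the inclusion $B(f(a),s)\subset f(\Omega)$. There is no serious obstacle here; the only point requiring care is the correct reading of $\liminf_{q\to\partial\Omega}$ as a ``boundary behavior'' statement, which is what powers the elimination of the $q^{*}\in\partial\Omega$ case and hence the closedness step.
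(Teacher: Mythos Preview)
Your proof is correct and follows essentially the same approach as the paper: both use compactness of $\overline{\Omega}$ to extract a convergent subsequence, split on whether the limit lies in $\Omega$ or $\partial\Omega$, and invoke the $\liminf$ hypothesis to rule out the boundary case, finishing with connectedness of the ball. The only cosmetic difference is packaging---you phrase the final step as a clopen argument on $E=f(\Omega)\cap B(f(a),s)$, while the paper shows directly that $\partial f(\Omega)$ misses $B(f(a),s)$ and then concludes.
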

\begin{proof}
For each point $w$ on the boundary $\partial f(\Omega)$ of $f(\Omega)$, there is a sequence $\{q_n\}_{n=1}^{\infty}$ in $\Omega$ such that
$\lim_{n\rightarrow\infty}f(q_n)=w$. Since $\overline{\Omega}$ is compact, we may assume that $\{q_n\}_{n=1}^{\infty}$ converges to a point, say $q_{\infty}\in \overline{\Omega}$. If $q_{\infty}\in\Omega$, then, by the continuity  of $f$, $w=f(q_{\infty})\in f(\Omega)$, which  contradicts with  the openness of $f(\Omega)$. Therefore, $q_{\infty}\in \partial \Omega$. This together with $(\ref{size-condition})$ implies  that
$$|w-f(a)|=\lim_{n\rightarrow +\infty} |f(q_n)-f(a)|\geq \liminf_{q\rightarrow\partial \Omega}|f(q)-f(a)|=s>0.$$
In other words, the boundary $\partial f(\Omega)$ of the open set $f(\Omega)$ lies outside of the ball $B\big(f(a), s\big)$. Consequently, $f(\Omega)$ must contain the ball $B(f(a), s)$.
\end{proof}
\begin{remark}
From the proof of  Proposition \ref{open} above, the result holds naturally under the conditions as described in Proposition \ref{open} for general setting $\mathbb{R}^{n}$, instead of quaternions  $\mathbb{H}\cong \mathbb{R}^{4}$.
\end{remark}
 The so-called \textit{Apollonius circle} also can be generalized trivially to the quaternionic setting.
\begin{lemma}[Apollonius]\label{Apollonius}
 Let $a,b\in \mathbb{H}$, and $\mathbb{R} \ni t\neq 1$. Then the set $\{q\in \mathbb{H}: |q-a|\leq t|q-b|\}$ is  the closed ball $\overline{B(c,r)}$  with center and radius given by
$$c=\frac{a-t^{2}b}{1-t^{2}}, \quad r=\frac{|a-b|}{1-t^{2}}t.$$
\end{lemma}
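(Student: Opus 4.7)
The plan is to reduce the defining inequality to a real quadratic inequality in $q$ by squaring and then complete the square, exactly as in the Euclidean-plane proof of the Apollonius circle. Although $\mathbb{H}$ is non-commutative, $|p|^{2}=p\overline{p}$ is real-valued, and the expansion $|q-p|^{2}=|q|^{2}-2\,{\rm{Re}}(p\overline{q})+|p|^{2}$ gives a real affine-quadratic expression in the four real components of $q$; so the classical completion-of-squares argument carries over unchanged.

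First, I would dispose of the degenerate values of $t$: for $t=0$ the set is $\{a\}$, for $t<0$ it is empty unless $a=b$ (in which case it is $\{a\}$), so the substantive range is $0<t<1$ (the case $t>1$ can be handled identically up to a sign flip at the division step). In this range, squaring yields $|q-a|^{2}\le t^{2}|q-b|^{2}$, and expanding via the identity above collects terms into
\[
(1-t^{2})|q|^{2}-2\,{\rm{Re}}\big((a-t^{2}b)\overline{q}\big)+|a|^{2}-t^{2}|b|^{2}\le 0.
\]
Since $1-t^{2}>0$, dividing through and setting $c=(a-t^{2}b)/(1-t^{2})$ as in the statement rewrites this as
\[
|q-c|^{2}\le |c|^{2}-\frac{|a|^{2}-t^{2}|b|^{2}}{1-t^{2}}.
\]

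Second, I would verify that the right-hand side equals $r^{2}$ with $r$ as stated. Placing $|c|^{2}=|a-t^{2}b|^{2}/(1-t^{2})^{2}$ over the common denominator $(1-t^{2})^{2}$, the $|a|^{2}$ and $|b|^{2}$ terms in the numerator telescope and leave precisely $t^{2}|a-b|^{2}$, giving $r=t|a-b|/(1-t^{2})$; hence the solution set is the closed ball $\overline{B(c,r)}$. The closure comes for free since both squaring a non-negative inequality and dividing by a positive quantity preserve non-strictness. The main obstacle is genuinely minor: one has only to respect non-commutativity by sticking to the scalar identities $|p|^{2}=p\overline{p}$ and $2\,{\rm{Re}}(p\overline{q})=p\overline{q}+q\overline{p}$ rather than commuting quaternions freely, and to keep track of the sign of $1-t^{2}$ at the division step.
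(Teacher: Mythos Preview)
Your argument is correct and is precisely the ``trivial generalization'' the paper alludes to: the paper does not supply a proof of this lemma at all, merely remarking that the classical Apollonius circle carries over to $\mathbb{H}$. Your completion-of-squares computation, using only the scalar identities $|p|^{2}=p\overline{p}$ and $p\overline{q}+q\overline{p}=2\,{\rm Re}(p\overline{q})$, is exactly the intended verification; note only that the lemma is applied in the paper with $t=|q|\in(0,1)$, so the case $0<t<1$ you treat in full is the one that matters (and indeed the stated formula for $r$ is negative when $t>1$, so the lemma as phrased is really a statement about that range).
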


Let  $\mathcal{SR}(\mathbb{B},\mathbb{B})$ be  the class of regular function $f\in \mathcal{SR}(\mathbb{B})$  with values in $\mathbb{B}$. A typical example of $\mathcal{SR}(\mathbb{B},\mathbb{B})$ is a regular M\"{o}bius transformation of $\mathbb B$ onto $\mathbb B$ given by
$$f(q)=(1-q\overline{u})^{-\ast}\ast(q-u)v$$ with $u\in \mathbb B$ and $v\in \partial \mathbb B$ (cf. \cite[Corollary 9.17]{GSS}).

\begin{proposition}[Lindel\"{o}f]\label{L-inequality}
Let $f\in \mathcal{SR}(\mathbb{B},\mathbb{B})$. Then for all  $q\in\mathbb B$, the following inequalities hold:

\begin{equation}\label{20}
\Big|f(q)-\frac{1-|q|^2}{1-|q|^2|f(0)|^2}f(0)\Big|\leq\frac{|q|\big(1-|f(0)|^2\big)}{1-|q|^2|f(0)|^2};
\end{equation}

\begin{equation}\label{21}
\frac{|f(0)|-|q|}{1-|q||f(0)|}\leq|f(q)|\leq \frac{|q|+|f(0)|}{1+|q||f(0)|};
\end{equation}
\begin{equation}\label{22}
\big|f(q)-f(0)\big|\leq \frac{|q|\big(1-|f(0)|^2\big)}{1-|q||f(0)|}.
\end{equation}

Equality holds for one of inequalities in $(\ref{20})-(\ref{22})$ at some point
$q_0\in \mathbb B\setminus\{0\}$ if and only if $f$ is a regular M\"{o}bius transformation of $\mathbb B$ onto $\mathbb B$.
\end{proposition}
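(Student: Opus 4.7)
The plan is to mirror the classical complex-variable derivation of Lindel\"of's inequality, with two quaternionic adjustments: the auxiliary pseudo-hyperbolic estimate will be obtained slice-by-slice via the Splitting Lemma, and the passage from that norm inequality to a ball inclusion will be carried out through the quaternionic Apollonius identity (Lemma~\ref{Apollonius}).

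As a preliminary, I would establish the Schwarz lemma for slice regular self-maps of $\mathbb{B}$: if $g \in \mathcal{SR}(\mathbb{B},\mathbb{B})$ with $g(0)=0$, then $|g(q)|\le|q|$ on $\mathbb{B}$. Fixing $I\in\mathbb{S}$ and applying the Splitting Lemma with $J\perp I$ to write $g_I = G_1 + G_2 J$ (with $G_1, G_2 : \mathbb{B}_I\to\mathbb{C}_I$ holomorphic, $G_1(0)=G_2(0)=0$), the functions $G_1(z)/z$ and $G_2(z)/z$ extend holomorphically to $\mathbb{B}_I$; the hypothesis $|G_1|^2+|G_2|^2\le 1$, combined with the maximum modulus principle applied to the subharmonic function $|G_1(z)/z|^2 + |G_2(z)/z|^2$ on $|z|\le r$ with $r\uparrow 1$, yields $|g_I(z)|\le |z|$.

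With $a := f(0)$ and $q\in\mathbb{B}$ arbitrary, the core step is the pseudo-hyperbolic estimate
$$|f(q) - a| \,\le\, |q|\,|1 - \bar a\, f(q)|. \qquad (\star)$$
To prove $(\star)$ slice-by-slice, pick $I\in\mathbb{S}$ with $q\in\mathbb{B}_I$ and split $f_I = F + GJ$; the pair $\Phi=(F,G)$ is a holomorphic map from $\mathbb{B}_I \cong \mathbb{D}$ into $\overline{\mathbb{B}^2}\subset\mathbb{C}_I^2$. The vector-valued Schwarz--Pick inequality for such $\Phi$ (obtainable from the scalar version by testing against the unit vector $\Phi(0)/|\Phi(0)|$), combined with the elementary quaternionic identity $|1 - \bar a w|^2 - |w - a|^2 = (1 - |w|^2)(1 - |a|^2)$, valid for all $w \in \mathbb{H}$, will deliver $(\star)$. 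Once $(\star)$ is available, $(\ref{20})$ will follow from Lemma~\ref{Apollonius}: factoring $|1 - \bar a w| = |a|\,|w - a/|a|^2|$ for $a \ne 0$ casts $(\star)$ into Apollonius form $|w - A| \le T |w - B|$ with $T = |q||a| < 1$, and Apollonius identifies the solution set as exactly the closed ball centred at $\tfrac{(1-|q|^2)\,a}{1 - |q|^2|a|^2}$ of radius $\tfrac{|q|(1 - |a|^2)}{1 - |q|^2|a|^2}$.

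The inequalities $(\ref{21})$ and $(\ref{22})$ will then follow from $(\ref{20})$ by the triangle inequality applied to the centre--radius description of the ball, with routine algebraic simplification. For the equality statement I would trace the chain backwards: equality in any of $(\ref{20})$--$(\ref{22})$ at a point $q_0 \ne 0$ forces equality in $(\star)$, hence in the vector Schwarz--Pick step, which rigidifies $\Phi$; unwinding the splitting then forces $f$ to coincide with a regular M\"obius transformation of $\mathbb{B}$ onto $\mathbb{B}$. The principal obstacle is the reduction to $(\star)$: the classical complex proof forms the holomorphic auxiliary $(1 - \bar a f(z))^{-1}(f(z) - a)$ and invokes the scalar Schwarz lemma, but in the quaternionic setting this auxiliary fails to be slice-holomorphic, since pointwise inversion in $\mathbb{H}$ introduces conjugation. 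The slice-and-split reduction to a vector-valued classical problem on $\overline{\mathbb{B}^2}\subset\mathbb{C}_I^2$ is the key workaround.
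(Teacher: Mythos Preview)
Your reduction of $(\star)$ to a ``vector-valued Schwarz--Pick'' inequality on a single slice does not go through. Via the quaternionic identity you quote, $(\star)$ is equivalent to
\[
(1-|z|^2)\,|\Phi(z)-\Phi(0)|^2 \;\le\; |z|^2\bigl(1-|\Phi(z)|^2\bigr)\bigl(1-|\Phi(0)|^2\bigr)
\]
for the split map $\Phi=(F,G):\mathbb{B}_I\to\overline{\mathbb{B}^2}$. But this inequality is \emph{false} for general holomorphic $\Phi:\mathbb{D}\to\mathbb{B}^2$: take $\Phi(z)=(c,\,z\sqrt{1-c^2})$ with $0<c<1$, for which the left side equals $(1-|z|^2)|z|^2(1-c^2)$ while the right side equals $|z|^2(1-c^2)^2(1-|z|^2)$, and the inequality fails whenever $c\neq 0$. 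Testing against $\Phi(0)/|\Phi(0)|$ only controls the component of $\Phi(z)-\Phi(0)$ along $\Phi(0)$, not its full norm, so the scalar Schwarz--Pick cannot be upgraded this way.

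The point is that the pair $(F,G)$ coming from $f\in\mathcal{SR}(\mathbb{B},\mathbb{B})$ is \emph{not} an arbitrary holomorphic map into $\mathbb{B}^2$: the condition $|f|<1$ on \emph{every} slice imposes a global constraint that your single-slice argument discards. (Indeed, the function $f(q)=c+q\sqrt{1-c^2}\,J$ corresponding to the $\Phi$ above is slice regular and bounded by $1$ on $\mathbb{B}_I$, but one checks that $|f(-tJ)|>1$ for $t$ near $1$, so it is not a self-map of $\mathbb{B}$.) The paper exploits this global structure by invoking the Schwarz--Pick theorem for slice regular self-maps, which is formulated with the regular $\ast$-product; it then uses Proposition~\ref{prop:Quotient Relation} to rewrite the $\ast$-quotient as a pointwise quotient evaluated at $T_{1-f\overline{f(0)}}(q)$, and finally observes that this diffeomorphism preserves modulus, so the substitution $q\mapsto T^{-1}(q)$ yields $(\star)$ directly. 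Your Apollonius step and the derivation of $(\ref{21})$--$(\ref{22})$ from $(\ref{20})$ are fine; it is only the route to $(\star)$ that needs to be replaced by the regular Schwarz--Pick machinery.
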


\begin{proof}
The Schwarz-Pick theorem (see \cite{BS}) gives, for all $q\in\mathbb B$,
\begin{equation}\label{30}
\big|\big(1-f(q)\overline{f(0)}\,\big)^{-\ast}\ast\big(f(q)-f(0)\big)\big|\leq |q|,
\end{equation}
which together with Proposition $\ref{prop:Quotient Relation}$ implies that
$$\frac{\big|f\circ T_{1-f\overline{f(0)}}(q)-f(0)\big|}{\big|1-f\circ T_{1-f\overline{f(0)}}(q)\overline{f(0)}\big|} \leq |q|.$$
According to Lemma \ref{Apollonius}, the preceding inequality is equivalent to
$$\Big|f\circ T_{1-f\overline{f(0)}}(q)-\frac{1-|q|^2}{1-|q|^2|f(0)|^2}f(0)\Big|
\leq\frac{|q|\big(1-|f(0)|^2\big)}{1-|q|^2|f(0)|^2}.$$
Since $f(\mathbb B)\subset \mathbb B$, it follows from  Proposition $\ref{prop:Quotient Relation}$ that $T_{1-f\overline{f(0)}}$ is a homeomorphism with inverse $T_{1-f(0)\ast f^c}$.  Replacing $q$ by $T_{1-f(0)\ast f^c}(q)$ in the preceding inequality gives that
$$\Big|f(q)-\frac{1-|q|^2}{1-|q|^2|f(0)|^2}f(0)\Big|
\leq\frac{|q|\big(1-|f(0)|^2\big)}{1-|q|^2|f(0)|^2},\qquad \forall\,q\in\mathbb B.$$

If equality achieves at some point $0\neq q_0\in \mathbb B$ in the last inequality, then it also achieves at point $0\neq\widetilde{q_0}=T_{1-f(0)\ast f^c}(q_0)$  in inequality $(\ref{30})$. It thus follows from the Schwarz-Pick theorem that $f$ is a regular M\"{o}bius transformations of $\mathbb B$ onto $\mathbb B$.

Inequalities in $(\ref{21})$ and $(\ref{22})$ follow from $(\ref{20})$ as well as  the triangle inequality.  The proof is complete.
\end{proof}

For each $p=x+yI\in \mathbb H\setminus \mathbb R$ with $x, y\in \mathbb R$ and $I\in\mathbb S$, we denote by $\mathbb S_p$ the 2-dimensional sphere given by
$$\mathbb S_p:=\big\{x+yJ: J\in\mathbb S\big\}.$$
The next result is a crucial step towards Theorem \ref{Main result-Landau}.
\begin{lemma}[Landau]\label{Landau}
Let $f\in \mathcal{SR}(\mathbb{B},\mathbb{B})$ with $f(0)=0$ and $f'(0)=\alpha>0$. Then
\begin{enumerate}
\item [(a)] $f$ is injective on $B(0, r_0)$, where
$$r_0=\frac{\alpha}{1+\sqrt{1-\alpha^2}};$$
\item  [(b)] for each positive number $r\leq r_0$, $f\big(0, B(r)\big)$ contains the ball $B\big(0, R(r)\big)$, where
$$R(r)=r\frac{\alpha-r}{1-\alpha r}\geq r r_0.$$
\end{enumerate}
\end{lemma}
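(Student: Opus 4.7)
The plan is to adapt the classical proof of Landau's lemma in complex analysis to the slice regular setting, using the Lindel\"of inequality (Proposition \ref{L-inequality}) as the principal technical tool alongside the open-mapping theorem (Theorem \ref{open-map}) and Proposition \ref{open}.

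First I would introduce the auxiliary regular function $g(q) = \sum_{n \geq 0} q^n a_{n+1}$ on $\mathbb{B}$, where $f(q) = \sum_{n \geq 1} q^n a_n$ is the power series of $f$ (so $a_1 = \alpha$). Then $f(q) = q * g(q)$ and $g(0) = \alpha$. By Proposition \ref{prop:RP} applied to $q * g(q)$, the regular and pointwise products coincide away from the origin: $f(q) = q\, g(q)$ for $q \neq 0$, whence $|f(q)| = |q||g(q)|$. Applying inequality $(\ref{22})$ to $f$ (with $f(0)=0$) gives $|f(q)| \leq |q|$, and hence $|g(q)| \leq 1$ on $\mathbb{B} \setminus \{0\}$; combined with $g(0) = \alpha < 1$ this shows that $g$ takes values in $\overline{\mathbb{B}}$. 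Inequality $(\ref{21})$ applied to $g$ then yields
$$\frac{\alpha - |q|}{1 - \alpha|q|} \leq |g(q)| \leq \frac{|q| + \alpha}{1 + \alpha|q|}, \qquad q \in \mathbb{B},$$
so in particular $|f(q)| = |q||g(q)| \geq |q|(\alpha - |q|)/(1 - \alpha|q|)$ and $f$ is non-vanishing on $B(0, \alpha) \setminus \{0\}$.

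To prove part (b), I would fix $r \in (0, r_0]$. Since $B(0, r)$ is axially symmetric and $f$ is non-constant, Theorem \ref{open-map} ensures that $f(B(0, r))$ is open, and the lower bound above gives
$$\liminf_{q \to \partial B(0, r)} |f(q) - f(0)| \geq r \cdot \frac{\alpha - r}{1 - \alpha r} = R(r).$$
Proposition \ref{open} then yields $B(0, R(r)) \subset f(B(0, r))$. The estimate $R(r) \geq r\, r_0$ follows from the identity $(\alpha - r_0)/(1 - \alpha r_0) = r_0$ together with the monotonic decrease of $r \mapsto (\alpha - r)/(1 - \alpha r)$ on $(0, \alpha)$.

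For part (a) I would argue by contradiction. Suppose $f(q_1) = f(q_2) = w$ for distinct $q_1, q_2 \in B(0, r_0)$. The previous paragraph forces $q_1, q_2 \neq 0$, and pointwise $g(q_k) = q_k^{-1} w$. Invoking the slice-regular Schwarz--Pick inequality for $g : \mathbb{B} \to \overline{\mathbb{B}}$ (from \cite{BS}, cf.\ also inequality $(\ref{30})$ for the version anchored at the origin) and performing the algebraic manipulation familiar from the complex case---squaring the Schwarz--Pick inequality at the pair $(q_1, q_2)$, substituting $g(q_k)=q_k^{-1}w$, and simplifying---produces the modulus bound $|w| \leq |q_1||q_2|$, equivalently $|q_2| \geq |g(q_1)|$. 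Combined with the lower bound $|g(q_1)| \geq (\alpha - |q_1|)/(1 - \alpha|q_1|) \geq r_0$, valid for $|q_1| \leq r_0$ by the monotonicity just noted, this yields $|q_2| \geq r_0$, contradicting $q_2 \in B(0, r_0)$.

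The main obstacle is precisely the Schwarz--Pick step in part (a): the algebraic manipulation in the complex case exploits commutativity of multiplication in a crucial way (e.g.\ in the identity $g(z_1)-g(z_2) = w(1/z_1 - 1/z_2)$), and its quaternionic analogue requires careful handling of the regular quotient $(1 - g * \overline{g(q_2)})^{-\ast} * (g - g(q_2))$ via Proposition \ref{prop:Quotient Relation} to reduce to a pointwise quotient before computing moduli.
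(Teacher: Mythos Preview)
Your setup and part (b) are essentially what the paper does: write $f(q)=q\ast g(q)$, observe $g\in\mathcal{SR}(\mathbb B,\mathbb B)$ with $g(0)=\alpha$ (the paper invokes the maximum principle here to rule out $|g|=1$, which you should also do before applying $(\ref{21})$), derive the lower bound $|f(q)|\geq|q|(\alpha-|q|)/(1-\alpha|q|)$, and combine Theorem~\ref{open-map} with Proposition~\ref{open}.

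The gap is in part (a). The two-point Schwarz--Pick manoeuvre you propose does not go through in the quaternionic setting, and your final paragraph essentially concedes this. The complex-analytic simplification $g(z_1)-g(z_2)=w(z_2-z_1)/(z_1z_2)$ collapses two occurrences of $w$ into one; over $\mathbb H$ you have $g(q_1)-g(q_2)=(q_1^{-1}-q_2^{-1})w$ on the right but the Schwarz--Pick quotient is a \emph{regular} quotient built from $\ast$-products, and Proposition~\ref{prop:Quotient Relation} only lets you pass to a pointwise quotient after conjugating the variable by $T_{1-g\overline{g(q_2)}}$, which scrambles the relation $g(q_k)=q_k^{-1}w$. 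There is no clean route from the Bisi--Stoppato inequality to $|w|\leq|q_1||q_2|$ along these lines.

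The paper obtains $|w_0|\leq|q_1||q_2|$ by a different, zero-counting argument applied to $f$ rather than $g$. Set
\[
\psi(q)=\big(f(q)-w_0\big)\ast\big(1-f(q)\overline{w}_0\big)^{-\ast},
\]
a regular self-map of $\mathbb B$ vanishing at both $q_1$ and $q_2$. One then divides $\psi$ by a suitable product of regular M\"obius (Blaschke) factors with those zeros, distinguishing the cases $\mathbb S_{q_1}=\mathbb S_{q_2}$ (use the symmetrized factor) and $\mathbb S_{q_1}\cap\mathbb S_{q_2}=\emptyset$ (compose two Blaschke factors with the second centre adjusted by the diffeomorphism $T_{\varphi^c}$). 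The quotient is regular on $\mathbb B$ and, by the maximum principle, bounded in modulus by $1$; evaluating at $0$ gives $|w_0|\leq|q_1||q_2|$. This factorization step, together with the case split on the $2$-spheres, is the missing idea in your proposal.
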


\begin{proof}
Clearly, $\alpha\in (0,1]$. If $\alpha=1$, then $f(q)=q$ for all $q\in\mathbb B$ and the results hold trivially. Now we assume that  $\alpha\in (0,1).$  To prove ${\rm{(a)}}$, it suffices to show that if there exist two distinct points mapped by $f$ to one common point, then one of these two points must lie outside of the open ball $B(r_0).$ We proceed as follows. Suppose that there exists one point $q_0\in\mathbb B$ such that $\rho_0:=|q_0|>0$ and $f(q_0)=0$. From the maximum principle (see cf. \cite[Theorem 7.1]{GSS}) and $\alpha\in (0,1)$, it follows that the function $g(q):=q^{-1}f(q)$ is a regular self-mapping of $\mathbb B$. Applying the first inequality in (\ref{21}) to $g$ shows that the following inequality
\begin{equation}\label{SP-ineq}
|f(q)| \geq|q|\frac{\alpha-|q|}{1-\alpha|q|}
\end{equation}
holds for every $q\in\mathbb B$. Now it follows from   inequality   $(\ref{SP-ineq})$ with $q=q_0$ that
\begin{equation}\label{Radius01}
\rho_0\geq \alpha>r_0.
\end{equation}

Now suppose that there exist two distinct points $q_1, q_2\in\mathbb B$ such that $0<|q_1|\leq |q_2|=:\rho<1$ and $f(q_1)=f(q_2)=:w_0\neq 0$. We claim that
\begin{equation}\label{L-ineq}
|w_0|\leq \rho^2.
\end{equation}
Set
$$\psi(q)=\big(f(q)-w_0)\ast\big(1-f(q)\overline{w}_0\big)^{-\ast}.$$
We must consider the following two cases. The first case is that $q_1, q_2$ lie in a same 2-dimensional sphere, i.e. $\mathbb S_{q_1}=\mathbb S_{q_2}$.  In this case, the sphere $\mathbb S_{q_1}=\mathbb S_{q_2}$ is contained in the zero set of $\psi$, and hence the function
$$h(q):=\Big(\big((1-q\overline{q}_1)^{-\ast}\ast(q-q_1)\big)^s\Big)^{-1}\psi(q)$$
is regular on $\mathbb B$ and is bounded in modulus by one, in virtue of the maximum principle (see cf. \cite[Theorem 7.1]{GSS}). In particular, $|h(0)|=|w_0|/|q_1|^2\leq1$, and thus
$$|w_0|\leq |q_1|^2=\rho^2$$
as claimed.

The second case is that $q_1, q_2$ lie in two different 2-dimensional spheres, i.e. $\mathbb S_{q_1}\cap\mathbb S_{q_2}=\emptyset$. Set
$$\varphi(q)=(q-q_1)\ast(1-q\overline{q}_1)^{-\ast}, \qquad \textrm{and} \qquad \phi(q)=(q-\widetilde{q}_2)\ast(1-q\overline{\widetilde{q}}_2)^{-\ast},$$
where
$$\widetilde{q}_2=T_{\varphi^c}(q_2).$$
Then $\varphi\ast\phi$ has precisely two zeros $q_1$ and $q_2$.
Reasoning as before, $(\varphi\ast\phi)^{-\ast}\ast \psi$ is  also a regular function on $\mathbb B$ with values in $\overline{\mathbb B}$. In particular,
$$|w_0|/|q_1||\widetilde{q}_2|=\big|(\varphi\ast\phi)^{-\ast}\ast \psi(0)\big|\leq 1,$$
implying that
$$|w_0|\leq|q_1||\widetilde{q}_2|=|q_1||q_2|\leq \rho^2$$
as desired.

Now applying inequality $(\ref{SP-ineq})$ with $q=q_2$ once more, together with inequality $(\ref{L-ineq})$ implies that
$$\frac{\alpha-\rho}{1-\alpha\rho}\leq \rho,$$
which is equivalent to
\begin{equation}\label{Radius02}
\rho\geq r_0=\frac{\alpha}{1+\sqrt{1-\alpha^2}}.
\end{equation}
Now from inequalities (\ref{Radius01}) and (\ref{Radius02}) we conclude that $\left.f\right|_{B(0, r_0)}$ is injective. This completes the proof of ${\rm{(a)}}$.

From ${\rm{(a)}}$ and the open mapping theorem (see \cite[Theorem 7.7]{GSS}), it follows that
$$\left. f\right|_{B(0, r_0)}:B(0, r_0)\rightarrow f\big(B(0, r_0)\big)$$
 is a homeomorphism. Thus for an arbitrary fixed $r\in(0, r_0],$ we easily deduce from inequality  (\ref{SP-ineq}) that the boundary $\partial f\big( B(0, r)\big)$ of the open set $f\big( B(0, r)\big)$ (containing the origin $0$) lies outside of the ball $B\big(R(0, r)\big)$ and ${\rm{(b)}}$ immediately follows.
\end{proof}

\section{Proof of Theorems \ref{Bloch-Landau-I-type-Bloch}, \ref{Bloch-Landau-I-type-Bergman} and \ref{Main result-Landau}}

Now we use the lemmas established in the preceding section  to prove Theorems \ref{Bloch-Landau-I-type-Bloch}, \ref{Bloch-Landau-I-type-Bergman} and \ref{Main result-Landau}.

\begin{proof}[Proof of Theorem $\ref{Bloch-Landau-I-type-Bloch}$]
Recalling the classical Wolff-Warschawski-Noshiro theorem for holomorphic functions on convex domains in $\mathbb C$ (cf. \cite[lemma 2.4.1]{Graham-Kohr}), the assertion (a)  follows directly from the splitting lemma (Lemma   \ref{eq:Splitting})  and  Theorem \ref{Bonk}.

Now we turn to the assertion (b). We first prove  that
$$ B(0, \sqrt{3}/{4}) \subset f\big(B(0, 1/\sqrt{3})\big).$$ From Theorem \ref{open-map}, we deduce  that the image set $f\big(B(0, 1/\sqrt{3})\big)$ is open in $\mathbb H$. Since that $f(0)=0$, according to Proposition \ref{open}, it suffices to show that
$$\min_{|q|=\frac1{\sqrt{3}}}|f(q)|\geq \frac{\sqrt{3}}{4}.$$ Indeed, for each point $q=e^{I\theta}/\sqrt{3}$ with $I\in\mathbb S$ and $\theta\in \mathbb R$,
 from inequality (\ref{Bonk-ineq}) it follows  that
$$|f(q)|\geq {\rm{Re}}\,f(q) ={\rm{Re}}\,\int_{0}^{\frac{1}{\sqrt{3}}}f'(te^{I\theta})dt
\geq \int_{0}^{\frac{1}{\sqrt{3}}}\frac{1-\sqrt{3}t}{(1-\sqrt{1/3}t)^{3}}dt
=\frac{\sqrt{3}}{4}$$
as desired. Now it remains to prove the injectivity of $\left. f\right|_{B(0, r_{\mathbb B})}$ and the relation  $B(0, R_{\mathbb B})\subset  f\big(B(0, r_{\mathbb B})\big)$. To this end,  applying the fundamental theorem of calculus to $f$ yields that
\begin{equation}\label{growth-estimate01}
|f(q)|\leq\frac{1}{2}\log \frac{1+|q|}{1-|q|}
\end{equation}
for all $q\in \mathbb{B}$. For each $r\in (0, 1)$, consider the regular function $g$ given by $g(q)=f(rq)$, which satisfies both
$g(0)=0$ and $g'(0)=r$. Thus it follows from  Lemma \ref{Landau} that
$g$ is injective in  $B(0, \rho_{0})$ and $B(0, R)\subset g(\mathbb{B})$, where $$\rho_{0}=\frac{r}{\frac{1}{2}\log \frac{1+r}{1-r}+\sqrt{\frac{1}{4}\log ^{2}\frac{1+r}{1-r}-r^{2}}}$$ and $$R=\frac{1}{2}\log \frac{1+r}{1-r}\rho_{0}^{2}.$$
This means that
$f$ is injective in  $B(0, r\rho_{0})$ and $B(0, R)\subset f\big(B(0, r\rho_{0})\big)$ for all $0<r<1$.
Hence $f$ is injective in  $B(0, r_{\mathbb B})$ and $B(0, R_{\mathbb B})\subset f\big(B(0, r_{\mathbb B})\big)$, where
$$ r_{\mathbb B}=\sup_{0<r<1}\bigg\{ \frac{1}{2}\log \frac{1+r}{1-r}-\sqrt{\frac{1}{4}\Big(\log\frac{1+r}{1-r}\Big)^{2}-r^{2}} \bigg\}
\approx 0.3552,$$
and
$$R_{\mathbb B}=\sup_{0<r<1}\bigg\{\frac{1}{2r^{2}}\log \frac{1+r}{1-r}\bigg(\frac{1}{2}\log \frac{1+r}{1-r}-\sqrt{\frac{1}{4}\Big(\log\frac{1+r}{1-r}\Big)^{2}-r^{2}}\bigg)^{2}  \bigg\}\approx 0.2308.$$
\end{proof}

\begin{proof}[Proof of Theorem $\ref{Bloch-Landau-I-type-Bergman}$]
The proof is similar to that of Theorem $\ref{Bloch-Landau-I-type-Bloch}$. The only difference is that, instead of (\ref{growth-estimate01}), we use the following estimate:
\begin{equation}\label{growth-estimate02}
|f(q)|\leq\frac{1}{(1-|q|)^{\frac{2}{p}}}
\end{equation}
for every $f\in A^p(\mathbb B)$ and all $q\in \mathbb{B}$. The proof of inequality (\ref{growth-estimate02}) is standard, which goes as follows.
For $p\in [1,+\infty)$, by Lemma \ref{eq:Splitting}, $|f_{I}|^{p}$ is subharmonic on $\mathbb B_{I}$ for all $I\in \mathbb{S}$. Then, for $r\in [0,1-|z|),$ we have
 $$|f_{I}(z)|^{p}\leq \frac{1}{2\pi}\int_{0}^{2\pi}|f_{I}(z+re^{I\theta})|^{p}d\theta.$$
Integration gives that
 $$(1-|z|)^{2}|f_{I}(z)|^{p}
 \leq \frac{1}{\pi}\int_{0}^{1-|z|}\int_{0}^{2\pi}|f_{I}(z+re^{I\theta})
 |^{p}rd\theta dr,$$
 which together with  the prescribed condition $\|f\|_{A^{p}}\leq1$ implies that
 $$(1-|z|)^{2}|f_{I}(z)|^{p}
 \leq\int_{\mathbb{B}_{I}}|f_{I}|^{p}d\sigma_{I}\leq1$$
for all $ I\in \mathbb{S}$ and $z\in \mathbb{B}_{I}$. This leads to inequality (\ref{growth-estimate02}).
\end{proof}

\begin{proof}[Proof of Theorem $\ref{Main result-Landau}$]
With notations introduced in the introduction in mind, we consider the regular function $f_{\alpha}:\mathbb B\rightarrow \mathbb B$ given by
$$f_{\alpha}(q)=q\frac{q+\alpha}{1+q\alpha},$$
which belongs to $\mathcal{F}_{\alpha}$. Therefore, by the preceding theorem  the assertions ${\rm{(a)}}$ and ${\rm{(b)}}$ in Lemma \ref{Landau} holds for this function $f_{\alpha}$. Note also that the slice derivative function
$$f'_{\alpha}(q)=\frac{q^2\alpha+2q+\alpha}{(1+q\alpha)^2}$$
has a zero at $q=-r(\mathcal{F}_{\alpha})$ and \
$$f_{\alpha}(-r)=-r\frac{\alpha-r}{1-\alpha r}$$
for every $r\in (0, r(\mathcal{F}_{\alpha})]$. This fact together with Lemma \ref{Landau} implies the statements of ${\rm{(a)}}$ and ${\rm{(c)}}$ in this theorem.

To complete the proof, it remains to consider the extremal cases in ${\rm{(b)}}$ and ${\rm{(d)}}$. If $r_{\alpha}(f)=r(\mathcal{F}_{\alpha})$, then from the proof of Lemma \ref{Landau} it follows that equality holds for inequality in $(\ref{Radius02})$. Consequently, equality has to hold for inequality in $(\ref{SP-ineq})$ at the point $q=q_2$. By Proposition \ref{L-inequality}, this is possible only if $f$ is of the form in $(\ref{Extremal-Cover})$. Conversely, according to Lemma \ref{Landau}, each regular function  $f$ given in $(\ref{Extremal-Cover})$ is indeed injective on the ball $B\big(0, r(\mathcal{F}_{\alpha})\big)$. (It seems not easy to verify this fact directly.) Therefore, $r_{\alpha}(f)\geq r(\mathcal{F}_{\alpha})$.  Moreover, the slice derivative function  $f'$ has a zero at the point
$$q=r(\mathcal{F}_{\alpha})e^{I\theta}=\frac{\alpha}{1+\sqrt{1-\alpha^2}}e^{I\theta},$$
and hence the radius of the largest disc  $B(0, r)_I$ on which  $f_I$ is injective is precisely $r(\mathcal{F}_{\alpha})$. Thus we conclude that $r_{\alpha}(f)= r(\mathcal{F}_{\alpha})$ for each regular function  $f$ given in $(\ref{Extremal-Cover})$. This completes the proof of ${\rm{(b)}}$.  ${\rm{(d)}}$ can be proved similarly and its proof is left to the interested reader.
\end{proof}

\section{Proof of Theorem \ref{Bloch-Landau-II-type}}
In this section, we give a proof of  Theorem \ref{Bloch-Landau-II-type}. Given a constant $u\in \partial\mathbb{ B}$ and a regular function $f$ on $ \overline{\mathbb{B}} $ with power series expansion
$$f(q)=\sum_{n=0}^{\infty}q^na_n.$$
Then the \textit{slice regular rotation} $\widetilde{f}_{u}$ of $f$ is defined to be the regular function on $ \overline{\mathbb{B} }$ given by
$$ \widetilde{f}_{u}(q)=\sum\limits_{n=0}^{\infty}q^nu^{n}a_n.$$

Following the idea of Estermann in \cite{Estermann}, we can prove the following lemma.
\begin{lemma}\label{bloch-ball}
 Let $f\in \mathcal{SR}(\mathbb{B})$   be   nonconstant and satisfy
  \begin{equation}\label{der-condition}
  |f'(q)|\leq 2|f'(a)|
  \end{equation}
 for all $q\in \overline{B(a,r)}$ with some $a\in \mathbb B$ and $r\in (0, 1-|a|)$. Then there exists $u\in \partial\mathbb{B}$ such that
 $$B(f(a),R)\subset \widetilde{f}_{u}(\mathbb{B}),$$
 where $R=(5-2\sqrt{6})r|f'(a)|. $
\end{lemma}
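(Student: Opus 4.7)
The plan is to adapt Estermann's classical integration argument to the slice regular setting, using the slice regular rotation $\widetilde{f}_u$ to move the problem onto a ball centred at a real point, where the representation formula and the Lindel\"of inequality apply cleanly.

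First I would write $a = \rho e^{I\phi}$ with $\rho = |a|$, $I \in \mathbb{S}$, and set $u = e^{I\phi} \in \partial \mathbb{B}$. Splitting $f|_{\mathbb{C}_I}(z) = F(z) + G(z)J$ with $J \perp I$ via the splitting lemma and exploiting the commutativity of $z, u \in \mathbb{C}_I$, one computes directly that $\widetilde{f}_u|_{\mathbb{C}_I}(z) = F(zu) + G(zu)J$. From this identity: $\widetilde{f}_u(\rho) = f(a)$, $|\widetilde{f}_u'(\rho)| = |f'(a)| =: M$, and for $z \in B(\rho,r) \cap \mathbb{C}_I$ the bound $|\widetilde{f}_u'(z)| = |f'(zu)| \leq 2M$ holds by hypothesis (since $zu \in B(a,r)$). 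Because $\rho$ is real the ball $B(\rho,r)$ is axially symmetric, and the representation formula promotes this slice bound to a uniform bound on all of $B(\rho,r)$.

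Next I would introduce the slice regular function
\[
P(q) := \widetilde{f}_u(q) - f(a) - (q-\rho)\widetilde{f}_u'(\rho),
\]
which is regular because right multiplication by a constant quaternion preserves regularity, and satisfies $P(\rho) = 0$ and $P'(\rho) = 0$. The heart of the argument is a Schwarz--Pick-type bound for $P'$: by rescaling $\widetilde{f}_u'$ so as to obtain a regular self-map of $\mathbb{B}$ and applying Proposition \ref{L-inequality}, I expect to derive
\[
|P'(\zeta)| = |\widetilde{f}_u'(\zeta) - \widetilde{f}_u'(\rho)| \leq \frac{2M\,|\zeta-\rho|}{r - |\zeta-\rho|}, \qquad \zeta \in B(\rho,r).
\]
For any $q \in B(\rho,r)$ the line segment $\gamma(t) = \rho + t(q-\rho)$ lies in a single slice $\mathbb{C}_K$ (because $\rho$ is real), so splitting $\widetilde{f}_u|_{\mathbb{C}_K}$ and applying the chain rule yields $\left|\frac{d}{dt}P(\gamma(t))\right| = |q-\rho|\,|P'(\gamma(t))|$. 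Integrating the Schwarz--Pick estimate along $\gamma$ then gives $|P(q)| \leq 2M|q-\rho|^2/(r-|q-\rho|)$.

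Writing $s = |q-\rho|$, this produces
\[
|\widetilde{f}_u(q) - f(a)| \geq Ms - \frac{2Ms^2}{r-s} = \frac{Ms(r-3s)}{r-s},
\]
and a standard optimisation (solving $r^2 - 6rs + 3s^2 = 0$ for the critical point) locates the maximum at $s_* = r(3 - \sqrt{6})/3$ with value $(5 - 2\sqrt{6})Mr = R$. Since $\widetilde{f}_u(B(\rho,s_*))$ is open by Theorem \ref{open-map} and the displayed lower bound gives $\liminf_{q \to \partial B(\rho,s_*)} |\widetilde{f}_u(q) - f(a)| \geq R$, Proposition \ref{open} yields $B(f(a), R) \subset \widetilde{f}_u(B(\rho,s_*)) \subset \widetilde{f}_u(\mathbb{B})$. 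The hard part will be the Schwarz--Pick estimate in the third paragraph: the coefficient $2M$ in the numerator reflects the hypothesis $|f'| \leq 2|f'(a)|$ plus the triangle inequality, while the denominator $r - |\zeta-\rho|$ comes from the Pick (rather than the cruder Schwarz) form of Proposition \ref{L-inequality}, and these are precisely the coefficients that conspire to produce the extremal constant $5 - 2\sqrt{6}$.
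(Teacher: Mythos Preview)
Your overall strategy coincides with the paper's: rotate by $u=a/|a|$ so that the centre becomes the real point $\rho=|a|$, set $P(q)=\widetilde{f}_u(q)-f(a)-(q-\rho)\widetilde{f}_u'(\rho)$, bound $|P|$ on spheres about $\rho$, and then invoke Theorem~\ref{open-map} and Proposition~\ref{open}. The paper in fact treats $a\in(-1,1)$ separately and obtains the sharper radius $(3-2\sqrt2)r|f'(a)|$ there, taking $(5-2\sqrt6)r|f'(a)|$ only in the non-real case; your unified treatment is fine since the latter constant is smaller.

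The gap lies in your third paragraph. First, the representation formula does \emph{not} promote the slice bound $|\widetilde{f}_u'|\le 2M$ on $\mathbb{C}_I$ to the same bound on all of $B(\rho,r)$: writing $q=x+yK$ and $z=x+yI$, one has $|\widetilde{f}_u'(q)|\le \tfrac12|1-KI|\,|\widetilde{f}_u'(z)|+\tfrac12|1+KI|\,|\widetilde{f}_u'(\bar z)|$, which yields only $|\widetilde{f}_u'|\le 4M$ on $B(\rho,r)$ (this is exactly the bound the paper uses). Second, Proposition~\ref{L-inequality} applied to $h(q)=\widetilde{f}_u'(\rho+rq)/L$ does not produce the Cauchy-type estimate $\frac{2M|\zeta-\rho|}{r-|\zeta-\rho|}$; inequality~(\ref{22}) gives a bound of the form $L\cdot\frac{|q|(1-|h(0)|^2)}{1-|q||h(0)|}$, which has the wrong shape and constants. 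The paper instead uses the Cauchy integral formula on the slice $\mathbb{C}_K$ through $q$ and $\rho$ (via the splitting lemma) to obtain
\[
\big|\widetilde{f}_u'(\gamma(t))-\widetilde{f}_u'(\rho)\big|\;\le\;\frac{t|q-\rho|}{\,r-t|q-\rho|\,}\cdot 4M,
\]
and then the factor $\int_0^1 t\,dt=\tfrac12$ in the line integral yields precisely $|P(q)|\le \frac{2M|q-\rho|^2}{r-|q-\rho|}$. So your final inequality $|\widetilde{f}_u(q)-f(a)|\ge Ms-\frac{2Ms^2}{r-s}$ and the optimisation to $(5-2\sqrt6)rM$ are correct, but you reach them by two compensating errors (coefficient $2M$ instead of $4M$ in the $P'$ bound, and omitting the factor $1/2$ in the integration). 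Replace the Lindel\"of step by the slice Cauchy estimate with the $4M$ bound and keep the $\tfrac12$ from the $t$-integral, and the argument goes through.
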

\begin{proof}
Firstly, let us consider the case   that $a\in (-1,1)$.
Set
$$g(q)=f(q)-f(a)-(q-a)f'(a).$$  Then $g$ is a   regular function  on $\mathbb{B}$.
Applying the fundamental theorem of calculus to $g$ yields that, for all $q\in \mathbb{B}$,
\begin{equation}\label{der-estimate01}
\begin{split}
 g(q)
 =&\int_{0}^{1}\frac{d}{dt} g\big(tq+(1-t)a\big) dt
  \\
 =& (q-a)\int_{0}^{1}f'\big(tq+(1-t)a\big)-f'(a) dt.
 \end{split}
 \end{equation}
 Fix an arbitrary point $q=x+yI\in \overline{B(a,r)} \subset \mathbb{B}$ with  some $I\in \mathbb{S}$. By Lemma \ref{eq:Splitting} and Cauchy integral formula for holomorphic functions, the integrand in the second integral in (\ref{der-estimate01}) can be represented as
$$f'\big(tq+(1-t)a\big)-f'(a)=\frac{1}{2\pi I}\int_{\partial B(a,r)_{I}} \bigg(\frac{1}{z-\big(tq+(1-t)a\big)}-\frac{1}{z-a}\bigg)dzf'(z),$$
from which  and (\ref{der-condition}) it follows that, for all $t\in [0, 1]$,
$$ |f'\big(tq+(1-t)a\big)-f'(a)|\leq \frac{t|q-a|}{r-t|q-a|}
\max_{\overline{B(a,r)}}|f'|\leq \frac{2t|q-a|}{r-t|q-a|}|f'(a)|. $$
Therefore,
$$ |g(q)|\leq2|q-a|^{2}|f'(a)|\int_{0}^{1}\frac{t}{r-t|q-a|}dt
\leq\frac{|q-a|^{2}|f'(a)|}{r-|q-a|},$$
and hence
$$ |f(q)-f(a)|\geq |(q-a)f'(a)|-|g(q)|\geq |q-a|\Big(1-\frac{|q-a|}{r-|q-a|}\Big)|f'(a)|$$
for all $q\in \overline{B(a,r)}$. In particular,
the inequality
$$ |f(q)-f(a)| \geq (3-2\sqrt{2})r|f'(a)|$$
holds for all $q\in \partial B\big(a, (1-\sqrt{2}/2)r \big)$.
Note that $a\in (-1, 1)$,  $B(a,r)$ is an  axially  symmetric open  subset of  $\mathbb{B}$. Hence,  by Theorem \ref{open-map}, $f\big(B(a,r)\big)$ is open.
This together  with  Proposition \ref{open} implies that
\begin{eqnarray} \label{condition1}
B(f(a),R_{1})\subset f(B(a,r))\subset f(\mathbb{B}),
\end{eqnarray}
where $R_1=(3-2\sqrt{2})r|f'(a)|.$

Secondly, let us consider the case  that $a\in \mathbb{B}\setminus\mathbb{R}$.
Let  $u=\frac{a}{|a|}\in \partial\mathbb{B}_{I}$ with $I\in \mathbb{S}$ and consider  the   regular function $\widetilde{f}_{u}(q)$ on $\mathbb{B}$.
It is obvious that
$$\widetilde{f}_{u}(|a|)=f(a) \ \ \textrm{and} \ \ \widetilde{f}_{u}'(|a|)=uf'(a).$$

The representation formula for regular functions in \cite[Theorem 3.1]{CGSS} gives that
$$\widetilde{f}_{u}'(q)
=\frac{1}{2}(1-I_qI)\widetilde{f}'_{u}(z)+
\frac{1}{2}(1+I_qI)\widetilde{f}'_{u}(\overline{z}),$$
where $q=x+yI_q $ for some $I_q \in \mathbb{S} $ and $z=x+yI$.

Notice that  $  \widetilde{f}'_{u}(\overline{z})=uf'(\overline{z}u)$ and $\widetilde{f}'_{u}(z)=uf'(zu)$, then  we obtain that
$$ |\widetilde{f}_{u}'(q)|\leq |f'(zu)|+|f'(\overline{z}u)|, $$
and hence, for $|q-|a||\leq r$,  which implies  that  $|zu-a|\leq r$ and $|\overline{z}u-a|\leq r$,
\begin{eqnarray*}
 \max_{q\in \overline{B(|a|,r)}}|\widetilde{f}_{u}'(q)|
 &\leq&
\max_{zu\in \overline{B(a,r)}_{I}}(|f'(zu)| + \max_{\overline{z}u\in \overline{B(a,r)}_{I}} |f'(\overline{z}u)|)
 \\
 &\leq& 4|f'(a)|
  \\
 &=&4|\widetilde{f}_{u}'(|a|)|.
 \end{eqnarray*}
Here the second inequality follows from (\ref{der-condition}).

Now a similar argument as in the first case gives that
$$ |\widetilde{f}_{u}(q)-\widetilde{f}_{u}(|a|)|\geq |q-|a||\Big(1-\frac{2|q-|a||}{r-|q-|a||}\Big)|\widetilde{f}_{u}'(|a|)|, \qquad \forall\, q\in \overline{B(|a|, r)}.$$
In particular, for any $q\in \partial B\big(|a|, (1-\sqrt{6}/3)r \big)$,
$$ |\widetilde{f}_{u}(q)-\widetilde{f}_{u}(|a|)|\geq (5-2\sqrt{6})r|\widetilde{f}_{u}'(|a|)|
= (5-2\sqrt{6})r|f'(a)|=:R_{2}.$$
 By   Proposition \ref{open} and  Theorem \ref{open-map} again, we have
$$B(\widetilde{f}_{u}(|a|),R_{2})\subset \widetilde{f}_{u}(B(|a|,r))\subset \widetilde{f}_{u}(\mathbb{B}), $$
i.e.,
\begin{eqnarray} \label{condition2}
 B(f(a),R_{2})\subset \widetilde{f}_{u}(\mathbb{B}).
 \end{eqnarray}

Now the desired result follows immediately from (\ref{condition1}) and (\ref{condition2}).
\end{proof}

Finally we have all the tools to prove the second  Bloch  type theorem
for regular functions.
\begin{proof}[Proof of Theorem \ref{Bloch-Landau-II-type}]
Let $f$ be as described. We consider    the continuous function $\psi$ on $\overline{\mathbb{B}}$ given by
$$\psi(q)=(1-|q|)|f'(q)|, $$
which vanishes on the boundary $\partial\mathbb{B}$. Then there exists some
$\ a\in \mathbb{B}$ such that
$$\psi(a)=\max_{q\in \overline{\mathbb{B}}}\psi(q).$$
Set $r:=\frac{1}{2}(1-|a|)$. Then it is evident that
$$\overline{B(a,r)}\subset \mathbb{B}, \qquad  r|f'(a)|\geq \frac{1}{2}\psi(0)=\frac{1}{2},$$
 and
  $$r\leq 1-|q|, \qquad \forall\,  q\in\overline{ B(a,r)}.$$
Whence
$$|f'(q)|\leq 2|f'(a)|, \qquad \forall\,  q \in \overline{B(a,r)}\subset \mathbb{B}.$$
From Lemma \ref{bloch-ball}, there exists $u\in \partial\mathbb{B}$ such that
 $$B(f(a),R)\subset \widetilde{f}_{u}(\mathbb{B}), $$
 where $R=(5-2\sqrt{6})r|f'(a)|\geq \frac{5}{2}-\sqrt{6}.$
The proof is complete.
\end{proof}

\bibliographystyle{amsplain}

\end{document}